\numberwithin{equation}{section}
\newtheorem{theorem}{Theorem}[section]
\newtheorem{corollary}[theorem]{Corollary}
\newtheorem{lemma}[theorem]{Lemma}
\def\Ker {{\rm Ker}}
\def\dim {{\rm dim}}
\def\div {{\rm div}}
\def\rot {{\rm rot}}
\def\grad {{\rm grad}}
\begin{document}

\vspace{7mm}

\centerline{\Large \bf Bi-invariant metric on contact diffeomorphisms group}

\vspace{3mm}

\centerline{\large \bf N.~K.~Smolentsev }

\vspace{7mm}

\begin{abstract}
We show the existence of a weak bi-invariant symmetric nondegenerate 2-form  on  the  contact diffeomorphisms  group $\mathcal{D}_\theta$ of a contact Riemannian manifold $(M,g,\theta)$ and study its properties. We describe the Euler's equation on a Lie algebra of group $\mathcal{D}_\theta$ and calculate the sectional curvature of $\mathcal{D}_\theta$. In a case $\dim M =3$ connection between the bi-invariant metric on $\mathcal{D}_\theta$ and the bi-invariant metric on volume-preserving diffeomorphisms  group $\mathcal{D}_\mu$ of $M^3$ is discover.
\end{abstract}

\section{Contact transformations group and bi-invariant metric}\label{Contact-D}
Let $M$ be a smooth (of class $C^\infty$) compact orientable manifold of dimension $n = 2m+1$ without boundary. The manifold $M$ is said to be \emph{contact} if a 1-form $\theta$ with the following property is given on it:  the $(2m+1)$-form $\theta \wedge(d\theta)^m$ does not vanish everywhere on $M$. Such  a  form $\theta$ is  said  to  be  \emph{contact}. A  vector  field $\xi$ is  said  to  be  \emph{characteristic}  if  it  has  the properties
$$
\theta(\xi) = 1,\qquad     d\theta(\xi,\cdot) = 0.
$$
A \emph{contact distribution} $E$ on $M$ is defined as the kernel of the form $\theta$, $E=\Ker \theta$. Clearly, $TM = E\oplus \mathbb{R}\xi$. Let $\Gamma(TM)$  be the space of smooth vector fields on $M$ and $\Gamma(E)$ be the space of smooth vector fields on $M$  belonging to a distribution $E$.

A Riemannian structure $g$ on $M$ is said to be \emph{associated} with $\theta$ if there exists a tensor field $\varphi$ of type (1,1) on $M$ such that for any vector fields $X$ and $Y$ on $M$,

(1) $g(X,\xi) =\theta(X)$;

(2)  $\varphi^2 = -\mathrm{I} + \theta \otimes\xi$;

(3)  $d\theta(X,Y) = g(X,\varphi Y)$, \\
where $\varphi$ is considered as a morphism $\varphi:TM \rightarrow TM$ and $\mathrm{I}$ is the identity morphism.

Let's note some additional properties \cite{Bla1}.  For any vector fields $X$ and $Y$ on $M$,

{\rm (4)} $g(\xi,\xi) = 1$;

{\rm (5)} the distribution $E$ is orthogonal to the field $\xi$,

{\rm (6)} $\varphi(\xi)=0$, $\varphi(E) = E$;

{\rm (7)} $\varphi$ is skew-symmetric and $\left(\varphi |_E \right)^2=-\mathrm{I}_E$;

{\rm (8)} $d\theta(\varphi X,\varphi Y)=d\theta(X,Y)$;

{\rm (9)} $g(X,Y)=\theta(X)\theta(Y)+d\theta(\varphi X,Y)$.

\vspace{1mm}
A mapping $\eta :M \rightarrow M$ is said to be \emph{contact} if it preserves the contact structure $\theta = 0$, i.e., if $\eta^* \theta = h\theta$, where $h$ is  a  function  of $M$. Although  the  algebra  of  infinitesimal  contact  transformations  is  usually defined as a subalgebra of the algebra of vector fields:
$$
\Gamma_\theta(TM) =\{V\in \Gamma(TM); \ L_V\theta=f\theta, \ \mbox { for a certain function }f  \},
$$
we define $\widetilde{\Gamma}_\theta$ as a subalgebra of $C^\infty(M) \oplus \Gamma(TM)$:
\begin{equation}
\widetilde{\Gamma}_\theta= \{(f,V)\in C^\infty(M) \oplus
\Gamma(TM);\ f\theta + di_V \theta +i_V d \theta =0 \}, 
\label{Eq-3.6}
\end{equation}
where the Lie algebra structure on $C^\infty(M) \oplus \Gamma(TM)$ is defined as follows:
\begin{equation}
[(f,U),(g,V)]= (U(g)-V(f), [U,V]). 
\label{[(f,U),(g,V)]-Eq-3.7}
\end{equation}
It is easy to verify that $\widetilde{\Gamma}_\theta$ is indeed a subalgebra of $C^\infty(M) \oplus \Gamma(TM)$.

\textbf{Remark 1.}  The algebra $C^\infty(M) \oplus \Gamma(TM)$ defined above is the Lie algebra of the semidirect product $C_*^\infty(M)*\mathcal{D}$ of the group $C_*^\infty(M)$ of positive smooth functions on $M$ and the diffeomorphisms group $\mathcal{D}$.
The group operation is given by
$$
(a,\eta)*(b,\zeta)= (a(b\circ \eta), \eta\circ \zeta).
$$
It is shown in \cite{Omo5} (Theorem 4.5.1) that $C_*^\infty(M)*\mathcal{D}$ is strongly an ILH-Lie group.  The coordinate mapping can be given by
\begin{equation}
\psi:C^\infty(M) \oplus \Gamma(TM)\rightarrow C_*^\infty(M) \oplus
\mathcal{D},\quad (f,V) \mapsto (e^f,E(V)),
\label{Eq-3.8}
\end{equation}
where $V \mapsto E(V)$ is the coordinate mapping on the diffeomorphisms group defined in \cite{Eb-Mar}.
\vspace{1mm}

There exists a one-to-one linear correspondence between $\widetilde{\Gamma}_\theta$ and the function space $C^\infty(M)$. Indeed, let $(f,U)\in \widetilde{\Gamma}_\theta$; then $L_U \theta+f\theta =0$. Decompose  the  field $U$ in  accordance  with  the  decomposition $TM = \mathbb{R}\xi\oplus E$:  $U=h\xi+v$.  Then
$$
L_U \theta+f\theta = dh +i_vd\theta + f\theta =0.
$$
Hence $\grad h -\varphi(v) +f \xi =0$.  We obtain from the latter relation that
$$
f= -\xi(h), \quad \varphi(\grad h) +v =0, \quad v =-\varphi(\grad h).
$$
Therefore,
\begin{equation}
(f,U)= (-\xi(h),h\xi-\varphi(\grad h)). 
\label{Eq-3.9}
\end{equation}
The latter formula defines the correspondence of $\widetilde{\Gamma}_\theta$  with the function space $C^\infty(M)$: $(f,U) \leftrightarrow h=\theta(U)$.

\begin{theorem} [see \cite{Omo5}, Theorem 8.3.6] \label{Th3.14}
The group
$$
\widetilde{\mathcal{D}}_\theta=\{(f,\eta)\in C_*^\infty(M)*\mathcal{D};\ f\eta^*\theta=\theta \}
$$
is a strong, closed ILH-subgroup of the group $C_*^\infty(M)*\mathcal{D}$ with the Lie algebra $\widetilde{\Gamma}_\theta$.
\end{theorem}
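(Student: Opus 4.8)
The plan is to follow Omori's scheme. Set $\Phi(f,\eta)=f\,\eta^{*}\theta$, so that $\widetilde{\mathcal{D}}_\theta=\Phi^{-1}(\theta)$, work in the ILH category over the Sobolev completions $\mathcal{D}^{s}$, realise $\widetilde{\mathcal{D}}_\theta$ locally near the identity as a ``kernel'' of $\Phi$, and then spread the resulting chart over the group by translations. The purely group-theoretic part is immediate: if $(f,\eta),(g,\zeta)\in\widetilde{\mathcal{D}}_\theta$ then $\eta^{*}\theta=f^{-1}\theta$ and $\zeta^{*}\theta=g^{-1}\theta$, and a one-line computation with these relations and the product $(a,\eta)*(b,\zeta)=(a(b\circ\eta),\eta\circ\zeta)$ shows that $\widetilde{\mathcal{D}}_\theta$ is stable under multiplication and under inversion; since $\Phi$ is continuous for the ILH topologies, $\widetilde{\mathcal{D}}_\theta$ is also closed in $C_{*}^{\infty}(M)*\mathcal{D}$. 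What remains is to put on $\widetilde{\mathcal{D}}_\theta$ an ILH-Lie group structure with Lie algebra $\widetilde{\Gamma}_\theta$ for which the inclusion is an ILH-embedding.

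Next I would write down the infinitesimal model. Differentiating $\Phi$ along a curve through $e=(1,\id)$ with velocity $(f,V)$ gives, by Cartan's formula,
\[
D\Phi_{e}(f,V)=f\theta+L_{V}\theta=f\theta+d i_{V}\theta+i_{V} d\theta,
\]
so $\ker D\Phi_{e}$ is precisely the algebra $\widetilde{\Gamma}_\theta$ of \eqref{Eq-3.6}, which by \eqref{Eq-3.9} is linearly isomorphic to $C^{\infty}(M)$ via $(f,V)\leftrightarrow h=\theta(V)$; this pins down $\widetilde{\Gamma}_\theta$ as the only candidate Lie algebra and tells us what the charts must look like. To produce them I would use a Moser--Gray homotopy argument: for every contact $1$-form $\beta$ near $\theta$ — all such $\beta$ being contact, since the contact condition is open — Moser's method applied to the path from $\theta$ to $\beta$, together with the nondegeneracy of $X\mapsto i_{X}d\theta$ on $E$ and the freedom in the Reeb component of the generating field, yields a pair $(f_{\beta},\eta_{\beta})$ with $f_{\beta}\,\eta_{\beta}^{*}\theta=\beta$, depending smoothly on $\beta$. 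Composing $\Phi$ with such a section and with left translations gives, at each sufficiently large level $s$, a local splitting of $C_{*}^{s}(M)*\mathcal{D}^{s}$ transverse to $\widetilde{\mathcal{D}}_\theta^{s}$ and hence a Hilbert-manifold chart for $\widetilde{\mathcal{D}}_\theta^{s}$ modelled on a neighbourhood of $0$ in a suitable Sobolev completion of $\widetilde{\Gamma}_\theta$; left translation by elements of $\widetilde{\mathcal{D}}_\theta$ moves this chart over the group, and passing to the inverse limit over $s$ — the charts being compatible and the estimates tame — produces the strong ILH-Lie group structure, while closedness at every level accounts for the word ``closed''.

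The one delicate point, and the reason this is not a routine application of the Banach implicit function theorem, is the analysis on a fixed Sobolev level: $\eta\mapsto\eta^{*}\theta$ loses one derivative, and, unlike the volume-preserving and symplectic cases — where $d\mu=0$, respectively $d\omega=0$, lets Hodge theory supply a right inverse of $D\Phi_{e}$ of order $-1$ that recovers the lost derivative — for $\theta$ the term $i_{V}d\theta$ is, up to the bundle isomorphism $\varphi$ (and the musical isomorphism), just $V$ itself, so the operation $V\mapsto i_{V}d\theta$ carries no exterior derivative, the natural right inverse $\alpha\mapsto(\alpha(\xi),\varphi\alpha^{\sharp})$ is merely of order $0$, and $D\Phi_{e}$ fails to be a surjection between the relevant Sobolev spaces. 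Circumventing this — via tame, sub-elliptic estimates for the sub-Laplacian along the contact distribution $E$ controlling the section $\beta\mapsto(f_{\beta},\eta_{\beta})$, together with the inverse-limit formalism — is exactly what is carried out in \cite{Omo5}, Theorem 8.3.6, whose statement we are recording here.
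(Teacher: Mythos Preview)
The paper does not give its own proof of this theorem: it is stated with the attribution ``see \cite{Omo5}, Theorem 8.3.6'' and no argument is supplied, so there is nothing in the paper to compare your proposal against. Your sketch is a faithful outline of the Omori-style argument --- the subgroup and closedness checks, the identification $\ker D\Phi_{e}=\widetilde{\Gamma}_\theta$, the construction of a local section via a Moser homotopy, and the honest acknowledgement that the derivative-loss issue in the contact case is the real content of \cite{Omo5}, Theorem 8.3.6 --- and you correctly flag that the final analytic step is deferred to Omori rather than carried out here.
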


\vspace{1mm}
A contact form $\theta$ is said to be \emph{regular} \cite{Bla1} if the vector field $\xi$ induces a free action of the unit circle $S^1$ on the manifold $M$. In this case, the quotient manifold $N=M/S^1$ is defined; the 2-form $d\theta$ is lowered to $N$ and defines a symplectic structure $\omega$ on $N$.  If $\pi:M\rightarrow N$ is the natural projection, then $\pi^*\omega =d\theta$.

On $M$ we fix  the  \emph{associated  metric} $g$ and  the  \emph{affinor} $\varphi $ corresponding to it.   Assume that the contact structure $\theta$ on $M$ is regular.

\vspace{1mm}
The  transformation $\eta:M \rightarrow M$ is  said  to  be \emph{exactly  contact},  or  it  is  called  \emph{quantomorphism} if  it  preserves  the  contact  form $\theta$: $\eta^* \theta = \theta$.   Let $\mathcal{D}_{\theta}$ be  the  connected  component  of  the  group  of  all exact contact transformations of the manifold $M$:
$$
\mathcal{D}_{\theta}= \{\eta \in \mathcal{D}_0, \quad \eta^* \theta = \theta \}.
$$
For any $\eta\in \mathcal{D}_{\theta}$ we have:
$$
Ad_\eta \xi = \xi.
$$

Omori showed in \cite{Omo5} in the case of a regular contact
manifold $(M,\theta)$ that the group $\mathcal{D}_{\theta}$ is an ILH-Lie group.  In proving this, he used the fact that the group $\mathcal{D}_\theta$ consists of diffeomorphisms commuting with the free action of the compact group $S^1$ on $M$.

\begin{theorem} [see \cite{Omo5}]
The  group $\mathcal{D}_\theta$ is  a  strong  ILH-subgroup  of  the  group $\widetilde{\mathcal{D}}_\theta$. The  Lie  algebra  of  the  group $\mathcal{D}_\theta$ consists of contact vector fields $X$ on $M$, i.e., those fields for which $L_X \theta =0$.
\end{theorem}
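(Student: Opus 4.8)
The plan is to realize $\mathcal{D}_\theta$ inside the group $\widetilde{\mathcal{D}}_\theta$ of Theorem~\ref{Th3.14}, whose strong ILH-structure is already available, and then to obtain both assertions by differentiating the defining constraint at the identity. The embedding is $\eta\mapsto(1,\eta)$: since $\eta^*\theta=\theta$ reads $1\cdot\eta^*\theta=\theta$, this maps $\mathcal{D}_\theta$ bijectively onto $\{(f,\eta)\in\widetilde{\mathcal{D}}_\theta:\ f\equiv 1\}$, which one checks to be a subgroup directly from the product $(a,\eta)*(b,\zeta)=(a(b\circ\eta),\eta\circ\zeta)$. The reason regularity of $\theta$ is what makes this work is that $\eta^*\theta=\theta$ forces $\eta^*d\theta=d\theta$, hence $\eta$ carries the Reeb field to itself, $\eta_*\xi=\xi$, so $\eta$ commutes with the $S^1$-action generated by $\xi$; thus $\mathcal{D}_\theta$ is the group of $S^1$-equivariant diffeomorphisms of the circle bundle $\pi:M\to N=M/S^1$ that preserve $\theta$, and the compactness of $S^1$ is what renders the infinite-dimensional calculus below tame --- this is the structural fact Omori exploits.

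To obtain the ILH-submanifold property I would use the smooth map
$$
\Phi:\widetilde{\mathcal{D}}_\theta\to C^\infty(M),\qquad \Phi(f,\eta)=\log f,
$$
which is a $1$-cocycle, $\Phi\big((a,\eta)*(b,\zeta)\big)=\Phi(a,\eta)+\Phi(b,\zeta)\circ\eta$, so that $\mathcal{D}_\theta=\Phi^{-1}(0)$. By the parametrization \eqref{Eq-3.9} of $\widetilde{\Gamma}_\theta$ by $h=\theta(U)$, its differential at the identity is $h\mapsto-\xi(h)=-L_\xi h$. Regularity supplies the splitting $C^\infty(M)=\pi^*C^\infty(N)\oplus L_\xi\big(C^\infty(M)\big)$ into functions constant on the $S^1$-orbits and functions of zero mean on every orbit, with $L_\xi$ killing the first summand and acting as an isomorphism of the second, and the same splitting with uniform bounds on each Sobolev completion $H^s$. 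Feeding this split surjection into Omori's inverse-function/subgroup criterion for strong ILH-Lie groups shows that $\mathcal{D}_\theta$ is a closed smooth ILH-submanifold of $\widetilde{\mathcal{D}}_\theta$ near the identity with tangent space $\ker d\Phi_e$; translating by the group law and noting that $\{\eta:\eta^*\theta=\theta\}$ is closed promotes this to the assertion that $\mathcal{D}_\theta$ is a strong ILH-subgroup of $\widetilde{\mathcal{D}}_\theta$.

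It remains to identify $\ker d\Phi_e$ with the contact vector fields. A tangent vector to $\mathcal{D}_\theta$ at the identity is a pair $(f,U)\in\widetilde{\Gamma}_\theta$ with $f=0$. The defining relation $f\theta+di_U\theta+i_Ud\theta=0$ is, by Cartan's formula, $f\theta+L_U\theta=0$; since $\theta$ vanishes nowhere, $f=0$ is equivalent to $L_U\theta=0$. Hence the Lie algebra of $\mathcal{D}_\theta$ is $\{U\in\Gamma(TM):\ L_U\theta=0\}$, and under \eqref{Eq-3.9} these are exactly the fields $U=h\xi-\varphi(\grad h)$ with $\xi(h)=0$, i.e. with contact Hamiltonian $h=\theta(U)\in\pi^*C^\infty(N)$.

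The main obstacle is the analysis compressed into the second paragraph: one must prove that the orbitwise decomposition $C^\infty(M)=\pi^*C^\infty(N)\oplus L_\xi(C^\infty(M))$ persists, with estimates uniform in the Sobolev index, so that Omori's implicit-function theorem in the ILH category genuinely applies to $\Phi$; this is the technical content behind the citation to \cite{Omo5}. Everything else is formal.
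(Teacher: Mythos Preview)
The paper does not prove this theorem at all; it is stated with a citation to Omori \cite{Omo5}, preceded only by the one-sentence remark that Omori's argument rests on the fact that $\mathcal{D}_\theta$ consists of diffeomorphisms commuting with the free $S^1$-action generated by $\xi$. So your sketch is already more than the paper offers, and your identification of the Lie algebra (third paragraph) is correct and cleanly done.

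Where your outline diverges from what the paper says about Omori's route is in the mechanism for the ILH-submanifold property. The paper indicates that Omori works through the group of $S^1$-equivariant diffeomorphisms of $M$ (for which the ILH-structure is a general result about compact group actions) rather than through the cocycle $\Phi(f,\eta)=\log f$ on $\widetilde{\mathcal{D}}_\theta$. Your route is plausible, but there is a genuine gap beyond the Sobolev estimates you flag at the end: the differential $d\Phi_e:\widetilde{\Gamma}_\theta\to C^\infty(M)$, which in your parametrization is $h\mapsto -\xi(h)$, is \emph{not} surjective onto $C^\infty(M)$; its image is only the summand $L_\xi\big(C^\infty(M)\big)$ of zero-orbit-mean functions. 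Calling it a ``split surjection'' is therefore misleading, and the standard regular-value form of the implicit function theorem does not apply with target $C^\infty(M)$. You would need either to show that $\Phi$ itself takes values in $L_\xi\big(C^\infty(M)\big)$ (not obvious, and likely false, since nothing forces $\log f$ to have zero mean on every Reeb orbit for a general contact diffeomorphism $\eta$), or to invoke a submanifold criterion that requires only a closed complement to $\ker d\Phi_e$ together with some additional structure ensuring $\Phi^{-1}(0)$ is locally modeled on that kernel. This is a structural issue with the chosen map $\Phi$, not merely a matter of uniform $H^s$-bounds, and it is exactly why the approach through $\mathrm{Diff}^{S^1}(M)$, as the paper attributes to Omori, is cleaner.
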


Let $T_e \mathcal{D}_{\theta}=\{X\in \Gamma(TM);\ L_X \theta =0 \}$ be the Lie  algebra  of the  group $\mathcal{D}_{\theta}$.
If $X\in T_e \mathcal{D}_{\theta}$ is a contact vector field, then the function $f = \theta(X)$ is called the \emph{contact Hamiltonian} of the field $X$, and the field $X$ itself is usually denoted by $X_f$.  The condition $L_X \theta = 0$ immediately implies $i_X d\theta = -df$. Therefore, the function $f$ is constant on the trajectories of the characteristic vector field $\xi$.
Using the latter relation, it is easy to show that the contact vector field $X_f$ has the form
\begin{equation}
X_f = f\xi - \varphi\ \grad f. 
\label{X-f-11.1}
\end{equation}

Let us define the \emph{Lagrange bracket} $[f,g]$ of contact Hamiltonians $f$ and $g$ by
\begin{equation}
[f,g]=X_f(g). 
\label{[f,g]-11.2}
\end{equation}
Then the following relation holds for the Lie bracket of contact vector fields and the Lagrange bracket:
\begin{equation}
[X_f,X_g]=X_{[f,g]}.  
\label{[Xf,Xg]-11.3}
\end{equation}

The group $\mathcal{D}_{\theta}$ has the following natural right-invariant weak Riemannian structure:  if $X,Y \in T_e \mathcal{D}_{\theta}$, then
\begin{equation}
(X,Y)_e=\int_M g(X(x),Y(x)) d\mu(x), 
\label{(X,Y)-D-theta-11.4}
\end{equation}
where $\mu=\theta\wedge (d\theta)^n$ and at integration the form $\mu$ we denote as $d\mu$.

Introduce one more inner product on $T_e \mathcal{D}_{\theta}$:
\begin{equation}
\langle X_f ,X_h \rangle_e =\int_M f h\ d\mu. 
\label{<Xf,Yg>-D-theta-11.5}
\end{equation}

The  relation  between  the  natural  weak  Riemannian  structure (\ref{(X,Y)-D-theta-11.4})  and (\ref{<Xf,Yg>-D-theta-11.5}) is expressed by the formula
\begin{equation}
(X_f,X_h)_e=\langle X_{f+\triangle f},X_h \rangle_e, 
\label{(Xf,Xg)-D-theta-11.6}
\end{equation}
where $\Delta = - \div\circ\grad$ is the Laplacian. Indeed, we have
$$
(X_f,X_h)_e=\int_M g(X_f,Y_h) d\mu =\int_M g(f\,\xi - \varphi\, \grad f,h\,\xi - \varphi\, \grad h) d\mu =
$$
$$
=\int_M fh \, d\mu + \int_M g(\varphi\, \grad f,\varphi\, \grad h) d\mu =\int_M fh d\mu +\int_M  g(\grad f,\grad h) d\mu =
$$
$$
=\int_M fh d\mu +\int_M -\div (\grad f)\, h\, d\mu =\int_M (f+\Delta f) h\, d\mu.
$$

\begin{theorem}  \label{Biinv-D-theta}
The inner product (\ref{<Xf,Yg>-D-theta-11.5}) on the Lie algebra $T_e\mathcal{D}_{\theta}$ of the group $\mathcal{D}_{\theta}$ defines a bi-invariant weak Riemannian structure on the group $\mathcal{D}_{\theta}$.
\end{theorem}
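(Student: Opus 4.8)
The plan is to deduce the bi-invariance of the weak metric obtained by right-translating $\langle\cdot,\cdot\rangle_e$ from the $\Ad$-invariance of the inner product (\ref{<Xf,Yg>-D-theta-11.5}) on $T_e\mathcal{D}_\theta$. Recall the standard fact that a right-invariant (weak) Riemannian metric on a Lie group is simultaneously left-invariant, hence bi-invariant, precisely when the inducing inner product on the Lie algebra satisfies $\langle\Ad_\eta X,\Ad_\eta Y\rangle_e=\langle X,Y\rangle_e$ for all group elements $\eta$; the proof is purely formal, using only that left and right translations commute and that $dR_{\eta^{-1}}dL_\eta|_e=\Ad_\eta$. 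Thus everything reduces to verifying this identity for $\eta\in\mathcal{D}_\theta$ and $X=X_f$, $Y=X_h$.

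First I would identify $\Ad_\eta X_f$. On a diffeomorphism group $\Ad_\eta X=\eta_*X$. Since $\eta^*\theta=\theta$, the pushed-forward field stays contact: by naturality of the Lie derivative $\eta^*\!\left(L_{\eta_*X_f}\theta\right)=L_{X_f}(\eta^*\theta)=L_{X_f}\theta=0$, hence $L_{\eta_*X_f}\theta=0$. A direct computation from the definitions of push-forward and pull-back gives $\theta(\eta_*X)=\bigl[(\eta^*\theta)(X)\bigr]\circ\eta^{-1}$, so the contact Hamiltonian of $\eta_*X_f$ is $\bigl[\theta(X_f)\bigr]\circ\eta^{-1}=f\circ\eta^{-1}$. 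By the bijection between contact fields and their Hamiltonians, $\Ad_\eta X_f=X_{f\circ\eta^{-1}}$.

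Next, from $\eta^*\theta=\theta$ we get $\eta^*(d\theta)=d(\eta^*\theta)=d\theta$, hence $\eta$ preserves the volume form $\mu$ (built from $\theta$ and $d\theta$), i.e. $\eta^*\mu=\mu$. The change-of-variables formula $x\mapsto\eta(x)$ then gives $\int_M(\phi\circ\eta^{-1})\,d\mu=\int_M\phi\,d\mu$ for every $\phi\in C^\infty(M)$, and therefore
\[
\langle\Ad_\eta X_f,\Ad_\eta X_h\rangle_e=\int_M(f\circ\eta^{-1})(h\circ\eta^{-1})\,d\mu=\int_M(fh)\circ\eta^{-1}\,d\mu=\int_M fh\,d\mu=\langle X_f,X_h\rangle_e.
\]
This is the desired $\Ad$-invariance, and the theorem follows.

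For a cross-check I would also verify the infinitesimal form directly from (\ref{[f,g]-11.2})--(\ref{[Xf,Xg]-11.3}): differentiating the above identity along a curve in $\mathcal{D}_\theta$ through $\id$ with velocity $X_k$ reduces $\Ad$-invariance to $\langle X_{[k,f]},X_h\rangle_e+\langle X_f,X_{[k,h]}\rangle_e=0$, i.e. to $\int_M\bigl(X_k(f)\,h+f\,X_k(h)\bigr)\,d\mu=\int_M X_k(fh)\,d\mu=0$. Since $X_k$ is contact it preserves $\mu$, so $L_{X_k}\bigl((fh)\mu\bigr)=X_k(fh)\,\mu$, and integrating this Lie derivative over the closed manifold $M$ gives zero (equivalently $L_{X_k}((fh)\mu)=d\,i_{X_k}((fh)\mu)$ and Stokes' theorem). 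I do not anticipate a genuine obstacle: the only delicate points are the bookkeeping of pull-backs versus push-forwards in computing $\Ad_\eta X_f=X_{f\circ\eta^{-1}}$, and invoking the "right-invariant plus $\Ad$-invariant $\Rightarrow$ bi-invariant" principle in the ILH category, which goes through verbatim since it is purely algebraic.
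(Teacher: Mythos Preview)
Your proposal is correct and follows essentially the same route as the paper: establish $\Ad_\eta X_f = X_{f\circ\eta^{-1}}$, observe that $\eta^*\mu=\mu$, and conclude $\Ad$-invariance of $\langle\cdot,\cdot\rangle_e$ by change of variables. The only minor difference is that the paper reads off the Hamiltonian of $\Ad_\eta X_f$ from the explicit decomposition $X_f=f\xi-\varphi\,\grad f$ together with $\Ad_\eta\xi=\xi$, whereas you obtain it more intrinsically from $\theta(\eta_*X_f)=\bigl[(\eta^*\theta)(X_f)\bigr]\circ\eta^{-1}$; your infinitesimal cross-check is additional and not in the paper.
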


\begin{proof}
It is easy to see that it is invariant under the adjoint action of the group $\mathcal{D}_{\theta}$ on $T_e \mathcal{D}_{\theta}$.  Indeed, if $X_f = f\xi - \varphi\, \grad f$, then
\begin{equation}
Ad_\eta X_f= (f\circ \eta^{-1})\xi- Ad_\eta(\varphi\, \grad f)= X_{f\circ \eta^{-1}}.
\label{Ad-eta-Xf}
\end{equation}
Moreover, $\eta^*(\mu)=\eta^*(\theta\wedge(d\theta)^m)=\theta\wedge(d\theta)^m =\mu$.
Therefore,
$$
\langle Ad_\eta (X_f), Ad_\eta (X_g) \rangle_e = \langle X_{f\circ \eta^{-1}}, X_{g\circ \eta^{-1}} \rangle_e=\int_M (f\circ\eta^{-1})( g\circ \eta^{-1})\ d\mu=
$$
$$
=\int_M f\cdot g\ \eta^*(d\mu) =\int_M f\cdot g\ d\mu = \langle X_f, X_g \rangle_e.
$$
Therefore, (\ref{<Xf,Yg>-D-theta-11.5}) defines a bi-invariant weak Riemannian structure on the group $\mathcal{D}_{\theta}$.
\end{proof}

\section{Euler equation on the Lie algebra $T_e \mathcal{D}_{\theta}$} \label{Euler}
Let $\mathfrak{g}$ be a semisimple, finite-dimensional Lie algebra,
and  let $H$ be  a  certain  function  on $\mathfrak{g}$. In \cite{Mi-Fo},  it  was  shown  that  the  extension  of  the  Euler  equation on  the  Lie  algebra  of  the  group $SO(n,\mathbb{R})$ of  motions  of  an $n$-dimensional  rigid  body  to  the  case  of  the general semisimple Lie algebra $\mathfrak{g}$ is an equation of the form
\begin{equation}
\frac{d}{dt}X = [X,\grad H(X)], 
\label{Eq-9.7}
\end{equation}
where $X \in\mathfrak{g}$ and the gradient of the Hamiltonian function $H$ is calculated with respect to the invariant Killing--Cartan inner product on $\mathfrak{g}$.

As $\mathfrak{g}$, let us  consider  the  Lie  algebra $T_e\mathcal{D}_{\theta}$ of contact vector fields  on  the regular contact Riemannian manifold $M$. On $T_e\mathcal{D}_{\theta}$, we have the invariant nondegenerate form (\ref{<Xf,Yg>-D-theta-11.5}) and the function (kinetic energy)
$$
T(X_f)= \frac 12(X_f,X_f)_e = \frac 12 \int_M g(X_f,X_f)d\mu,\quad X_f \in T_e\mathcal{D}_{\theta}.
$$
The function $T$ can be written as follows in terms of the inner product (\ref{<Xf,Yg>-D-theta-11.5}):
$$
T(X_f) = \frac 12(X_f,X_f)_e= \frac 12\langle X_{f+\triangle f},X_f \rangle_e.
$$
Perform the  Legendre  transform $Y_h = X_{f+\triangle f}$;  then
$$
T(X_f) =T(Y_{(1+\triangle)^{-1}h}) =\frac 12\left< Y_h, Y_{(1+\triangle)^{-1}h} \right>_e.
$$
Consider  the
Hamiltonian function $H(Y_h) =\frac 12 \left< Y_h, Y_{(1+\triangle)^{-1}h} \right>_e$ on the Lie algebra $T_e\mathcal{D}_{\theta}$. The gradient of the function $H$ with respect to the invariant inner product (\ref{<Xf,Yg>-D-theta-11.5}) is easily calculated:
$$
\grad H(Y_h) =  Y_{(1+\triangle)^{-1}h}.
$$
As in the finite-dimensional case, let us write the Euler equation on the Lie algebra $T_e\mathcal{D}_{\theta}$:
\begin{equation}
\frac{d}{dt}Y_h =[Y_h,Y_{(1+\triangle)^{-1}h}] = Y_{[h,(1+\triangle)^{-1}h]}. 
\label{Eq-9.8}
\end{equation}
\begin{equation}
\frac{d}{dt}h =[h,(1+\triangle)^{-1}h]. 
\label{Eq-Euler}
\end{equation}

Since $h = {f+\triangle f}$ and $[f,f]=0$, it follows that
\begin{equation}
\frac {\partial (f+\triangle f)}{\partial t}=[\triangle f,f] , 
\label{Eq-9.9}
\end{equation}

On the Lie algebra $T_e\mathcal{D}_{\theta}$, Eq.  (\ref{Eq-9.8}) or (\ref{Eq-9.9}) has the following two quadratic first integrals:
\begin{equation}
m(Y_h)=\left<Y_h,Y_h\right>_e=(X_{(1+\triangle)f},X_f)_e=\int_M g(X_{(1+\triangle)f},X_f) d\mu,
\label{m(Yg)}
\end{equation}
\begin{equation}
H(Y_g) = T(X_f) =\frac 12\int_M g(X_f,X_f) d\mu,
\label{H(Yg)}
\end{equation}
where $Y_h = X_{(1+\triangle)f}$.
The first of them $m(Y_h)$ is naturally called the \emph{kinetic moment}.  The invariance of the function $m$ follows from  the  invariance  of  the  inner  product  (\ref{<Xf,Yg>-D-theta-11.5})  on $T_e\mathcal{D}_{\theta}$. The  second  integral $H(Y_h)$ is  the \emph{kinetic energy}.

Since $H(X_f)=\frac 12\left<X_f,X_{(1+\triangle)f}\right>_e$, the operator $1+\triangle :T_e\mathcal{D}{\theta} \rightarrow T_e\mathcal{D}{\theta}$ is the inertia operator of our mechanical system $(T_e\mathcal{D}_{\theta}, H)$.
The eigenvectors $V_i$ of the operator $1+\triangle$ are naturally called (analogously to the rigid body  motion)  the \emph{axes  of  inertia},  and  the  eigenvalues $\lambda_i$ of  $1+\triangle$  are  called  the  \emph{moments  of  inertia}  with respect to the axes $V_i$.

The Euler equation (\ref{Eq-9.9}) can be written in the form
$$
\frac {\partial X_{(1+\triangle)f}}{\partial t}=-L_{X_f} X_{(1+\triangle)f},
$$
where $L_{X_f}$ is the Lie derivative. Therefore \cite{Ko-No}, the vector field $X_{(1+\triangle)f}(t)$ is transported by the flow $\eta_t$ of the field $X_f$:
$$
X_{(1+\triangle)f}(t)=d\eta_t(X_{(1+\triangle)f}(0)\circ \eta_t^{-1})=Ad_{\eta_t}(X_{(1+\triangle)f_0}) = X_{((1+\triangle)f_0)\circ\eta_t^{-1}},
$$
where $X_f(0)=X_{f_0}$ is the initial velocity field. Hamiltonian function $(1+\Delta) f(t,x)$ is  transported by the flow $\eta_t$:
$$
(1+\Delta) f(t,x)=((1+\Delta)f_0)(\eta_t^{-1}(x)).
$$
This immediately implies the following theorem.

\begin{theorem} \label{Th10.4}
Let $f= f(t,x)$ be a solution of the Euler equation (\ref{Eq-9.9}), and let $\eta_t$ be the flow on $M$ generated by the vector field $X_f$. Then the following quantities are independent of time $t$:
\begin{equation}
T=\frac 12\left(X_f,X_f \right)_e=\frac 12\int_M f(1+\Delta) f\ d\mu, 
\label{(L-Eq-10.9}
\end{equation}
\begin{equation}
I_k=\int_M \left((1+\Delta)f\right)^k\ d\mu. 
\label{(Ik-Eq-10.10}
\end{equation}
\end{theorem}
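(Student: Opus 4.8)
The plan is to obtain both conservation laws directly from the two facts established in the paragraph preceding the theorem: the transport formula $h(t,x):=\big((1+\Delta)f\big)(t,x)=h_0\!\left(\eta_t^{-1}(x)\right)$ with $h_0=(1+\Delta)f_0$, and the fact that $\eta_t$ is a one‑parameter family of exact contact transformations, so $\eta_t\in\mathcal{D}_\theta$ and hence $\eta_t^*\mu=\mu$ (exactly the computation used already in the proof of Theorem~\ref{Biinv-D-theta}). Given these, nothing beyond an elementary change of variables is required.

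For the invariants $I_k$ I would substitute the transport formula, pull the top form back along the diffeomorphism $\eta_t$, and use $\eta_t^*\mu=\mu$:
$$
I_k(t)=\int_M h(t,\cdot)^k\,\mu=\int_M \eta_t^*\!\big(h(t,\cdot)^k\,\mu\big)=\int_M \big(h(t,\cdot)\circ\eta_t\big)^k\,\eta_t^*\mu=\int_M h_0^{\,k}\,\mu=I_k(0),
$$
where the second equality uses that $\int_M\eta_t^*\alpha=\int_M\alpha$ on the closed orientable $M$, the third that $h(t,\cdot)\circ\eta_t=h_0$ by the transport formula, and the fourth that $\eta_t^*\mu=\mu$. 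The same line shows that $\int_M\Phi\big((1+\Delta)f(t,x)\big)\,d\mu$ is time‑independent for every smooth $\Phi$; these are the Casimir‑type invariants, and $I_1,I_2,\dots$ are the choices $\Phi(s)=s^k$ (for $k=2$ this recovers the kinetic moment $m(Y_h)$ already identified above).

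For $T$ I would argue as in the finite‑dimensional rigid body: $T$ is the value of the Hamiltonian $H(Y_h)=\tfrac12\langle Y_h,Y_{(1+\Delta)^{-1}h}\rangle_e$, and along a solution of the Euler equation (\ref{Eq-9.8}),
$$
\frac{d}{dt}H(Y_h)=\big\langle \grad H(Y_h),\ \tfrac{d}{dt}Y_h\big\rangle_e=\big\langle \grad H(Y_h),\ [\,Y_h,\grad H(Y_h)\,]\big\rangle_e=0,
$$
the last equality because differentiating the $\Ad$‑invariance of (\ref{<Xf,Yg>-D-theta-11.5}) proved in Theorem~\ref{Biinv-D-theta} gives the $\ad$‑invariance identity $\langle[U,V],W\rangle_e+\langle V,[U,W]\rangle_e=0$, applied with $U=Y_h$ and $V=W=\grad H(Y_h)$. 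Rewriting $H(Y_h)=T(X_f)$ via (\ref{(Xf,Xg)-D-theta-11.6}) yields the stated form $T=\tfrac12\int_M f(1+\Delta)f\,d\mu$. (Equivalently, since (\ref{Eq-9.9}) is the Euler–Arnold equation of the right‑invariant metric (\ref{(X,Y)-D-theta-11.4}) with inertia operator $1+\Delta$, this is just conservation of speed along a geodesic.)

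The computations are routine; the only points needing care are that everything takes place in the ILH category, so I would check that for the smooth solutions under consideration the transport formula is legitimate (it rests on the standard fact that a vector field frozen into a flow is $\Ad$‑transported, together with differentiation under the integral sign), and that the flow $\eta_t$ of $X_f$ genuinely remains in $\mathcal{D}_\theta$ — which is immediate from $L_{X_f}\theta=0$ — so that $\eta_t^*\mu=\mu$ is available. With that in place there is no substantial obstacle.
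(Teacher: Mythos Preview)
Your argument is correct and matches the paper's route: the paper derives the transport formula $(1+\Delta)f(t,x)=((1+\Delta)f_0)(\eta_t^{-1}(x))$ and states that the theorem ``immediately'' follows, which for $I_k$ is precisely your change-of-variables computation using $\eta_t^*\mu=\mu$; for $T$ the paper has already asserted, just before the transport paragraph, that $H(Y_h)=T(X_f)$ is a first integral (as the kinetic energy/Hamiltonian), and your $\ad$-invariance verification is the natural unpacking of that claim. You have simply made explicit what the paper leaves to the reader.
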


The curve $Y_h(t)= X_{(1+\triangle)f}(t)= Ad_{\eta_t}(X_{(1+\triangle)f_0})= Ad_{\eta_t}(Y_{h_0})$ on the Lie algebra $T_e\mathcal{D}_\theta$ that is a solution of the Euler equation (\ref{Eq-9.8}) lies on the orbit $\mathcal{O}(Y_{h_0})=\{Ad_\eta Y_{h_0};\ \eta \in \mathcal{D}_\theta \}$ of the
coadjoint action of the group $\mathcal{D}_\theta$.

Since the kinetic moment $m(X)$ is preserved under the motion, the orbit $\mathcal{O}(Y_{h_0})$ lies on the ''pseudo-sphere''
$$
S =\{Y_h \in T_e\mathcal{D}_\theta;\  \left<Y_h,Y_h\right>_e = r\},
$$
where $r=\left<Y_{h_0},Y_{h_0}\right>_e$. It  is  natural  to  expect  that  the  critical  points  of  the  function $H(Y_h)$ on  the orbit $\mathcal{O}(Y_{h_0})$ are stationary motions.  The orbit $\mathcal{O}(Y_h(0))$ is the image of the smooth mapping
$$
\mathcal{D}_\theta \rightarrow T_e\mathcal{D}_\theta,\quad  \eta \rightarrow Ad_\eta Y_{h_0}.
$$
Therefore, the tangent space $T_Y\mathcal{O}(Y_{h_0})$ to the orbit at a point $Y$ is
$$
T_Y\mathcal{O}(Y_{h_0})=\{[W,Y];\  W \in T_e \mathcal{D}_\theta\}.
$$
A point $Y\in \mathcal{O}(Y_{h_0})$ is critical for the function $H(Y_h)$ if $\grad H(Y_h)$ is orthogonal to the space $T_Y\mathcal{O}(Y_{h_0})$.
Since $\grad H(Y_h) =Y_{(1+\triangle)^{-1}h}$, the latter condition is equivalent to
$$
\left<Y_{(1+\triangle)^{-1}h},[W,Y]\right>_e=0\ \mbox{ for any }W\in T_e\mathcal{D}_\theta(M).
$$
The invariance of $\left< .,. \right>_e$ implies
$$
\left<[Y,Y_{(1+\triangle)^{-1}h}],W\right>_e =0\ \mbox{ for any }W\in T_e\mathcal{D}_\theta(M).
$$
From the nondegeneracy of the form $\left< .,. \right>_e$ on $T_e\mathcal{D}_\theta(M)$, we obtain that a point $Y$ on the orbit $\mathcal{O}(Y_h(0))$ is a critical point of the function $H$ iff
$$
[Y,Y_{(1+\triangle)^{-1}h}] = 0.
$$
It follows from the Euler equation $\frac {d}{dt}Y_h = [Y_h,Y_{(1+\triangle)^{-1}h}]$ that \emph{a field $Y_h$  is an equilibrium state of our system (i.e., a stationary motion) iff\, $Y_h$ is a critical point of the kinetic energy $H(Y_h)$  on the orbit $\mathcal{O}(Y_{h_0})$}.

\section{Curvature of the group $\mathcal{D}_{\theta}$} \label{Curvature}
The  sectional  curvature  of  the  group $\mathcal{D}_{\theta}$ with  respect  to  the  bi-invariant  metric (\ref{<Xf,Yg>-D-theta-11.5}) in  the  direction  to  a 2-plane $\sigma$ given  by  an  orthonormal  pair  of  contact  vector  fields $X_f,X_h \in T_e\mathcal{D}_{\theta}$ is expressed by the formula
$$
K_{\sigma}=\frac 14 \int_M [f,h]^2 d\mu.
$$
Thus, the group $\mathcal{D}_{\theta}$ is of nonnegative sectional curvature, and $K_\sigma=0$  iff the contact Hamiltonians $f$ and $h$ commutes, $[f,h] =0$.
\vspace{1mm}

The sectional curvature of the natural right-invariant Riemannian structure (\ref{(X,Y)-D-theta-11.4}) in  the  direction  to  a 2-plane $\sigma$ given  by  an  orthonormal  pair  of  contact  vector  fields $X,Y \in T_e\mathcal{D}_{\theta}$ can be found by the general formula \cite{Smo22}
\begin{multline}
\mathrm{K}_\sigma =-\frac 34([X,Y],[X,Y])_e -\frac 12([X,[X,Y]],Y)_e -\frac 12([Y,[Y,X]],X)_e - {} \\*
- \left(P_e(\nabla_XX),P_e(\nabla_YY)\right) + \frac 14\left(P_e(\nabla_XY + \nabla_YX),P_e(\nabla_XY +
\nabla_YX)\right). 
\label{K-sigma-Eq-8.4}
\end{multline}
where the operator $P_e$ must be considered as the orthogonal projection of the space $\Gamma(TM)$ on the space $T_e\mathcal{D}_{\theta}$ of contact vector fields on $M$ in accordance with the orthogonal decomposition $\Gamma(TM)= T_e\mathcal{D}_{\theta}\oplus \left(T_e\mathcal{D}_{\theta}\right)^{\bot}$.

\begin{lemma} \label{Lem11.1}
For any contact vector fields $X=X_f$ and $Y=X_h$ on $M$, the contact Hamiltonian $s$ of the vector field $P_e \left(\nabla_X Y \right)=X_s$ is expressed through $f$ and $h$ in the following way:
\begin{equation}
D s = \frac 12\left(D[f,h]+[f,D h]+[h, D f]\right), 
\label{D-s-D-theta-11.8}
\end{equation}
where $D = 1 + \Delta$, $\Delta$  is the Laplacian, and $[f,g]$ is the Lagrange bracket.
\end{lemma}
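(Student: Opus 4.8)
The plan is to pin down the contact Hamiltonian $s$ of $X_s=P_e(\nabla_{X_f}X_h)$ from the variational characterisation of the orthogonal projection and then to evaluate the resulting integral by the Koszul formula of the Levi--Civita connection $\nabla$ of $(M,g)$. By definition of $P_e$ (orthogonal projection with respect to the $L^2$-product $\int_M g(\cdot,\cdot)\,d\mu$), $X_s$ is the unique contact vector field satisfying $\int_M g(\nabla_{X_f}X_h,X_p)\,d\mu=(X_s,X_p)_e$ for all $X_p\in T_e\mathcal{D}_\theta$; by (\ref{(Xf,Xg)-D-theta-11.6}) the right-hand side equals $\langle X_{Ds},X_p\rangle_e=\int_M(Ds)\,p\,d\mu$. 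So it suffices to compute $\int_M g(\nabla_{X_f}X_h,X_p)\,d\mu$ and read off its coefficient against $p$.

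To do this I would insert the Koszul formula
$$2g(\nabla_{X_f}X_h,X_p)=X_f\,g(X_h,X_p)+X_h\,g(X_f,X_p)-X_p\,g(X_f,X_h)+g([X_f,X_h],X_p)-g([X_f,X_p],X_h)-g([X_h,X_p],X_f)$$
and integrate over $M$. The decisive observation is that every contact vector field preserves the volume form $\mu=\theta\wedge(d\theta)^m$: from $L_{X_f}\theta=0$ one gets $L_{X_f}d\theta=0$, hence $L_{X_f}\mu=0$, i.e. $\div_{\mu}X_f=0$. Therefore each ``derivative-of-the-metric'' term integrates to zero, e.g. $\int_M X_f\,g(X_h,X_p)\,d\mu=\int_M L_{X_f}\!\big(g(X_h,X_p)\big)\,d\mu=0$ by Stokes, and only the bracket terms remain. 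By (\ref{[Xf,Xg]-11.3}) these brackets are again contact fields $X_{[f,h]},X_{[f,p]},X_{[h,p]}$, so rewriting everything through $(X_a,X_b)_e=\langle X_{Da},X_b\rangle_e=\int_M(Da)\,b\,d\mu$ and using the $L^2(\mu)$-selfadjointness of $D=1+\Delta$ gives
$$2\int_M g(\nabla_{X_f}X_h,X_p)\,d\mu=\int_M(D[f,h])\,p\,d\mu-\int_M[f,p]\,(Dh)\,d\mu-\int_M[h,p]\,(Df)\,d\mu .$$

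Finally I would transfer $p$ out of the last two Lagrange brackets: since $[f,p]=X_f(p)$ and $\div_{\mu}X_f=0$, Stokes' theorem yields $\int_M[f,p]\,q\,d\mu=-\int_M p\,[f,q]\,d\mu$; with $q=Dh$ and $q=Df$ this turns the right-hand side into $\int_M\big(D[f,h]+[f,Dh]+[h,Df]\big)\,p\,d\mu$. Comparing with $2\int_M(Ds)\,p\,d\mu$ and letting $p$ run over all contact Hamiltonians (i.e. all functions constant along $\xi$), the nondegeneracy of the $L^2(\mu)$-pairing on this space gives $2Ds=D[f,h]+[f,Dh]+[h,Df]$, which is (\ref{D-s-D-theta-11.8}); here one uses, exactly as in Section~\ref{Euler}, that $D$ and the Lagrange bracket preserve the space of functions constant along $\xi$, so that both sides indeed lie in the tested space. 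The step I expect to be the main obstacle is the vanishing of the metric-derivative terms in the Koszul formula, i.e. recognising that contact vector fields are $\mu$-divergence free; granting that, the rest is bookkeeping with (\ref{[Xf,Xg]-11.3}), the duality $(\cdot,\cdot)_e=\langle D\,\cdot\,,\cdot\,\rangle_e$, and the antisymmetry of the Lagrange bracket under integration against $d\mu$.
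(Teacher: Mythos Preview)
Your proof is correct and follows the same route as the paper: Koszul's six-term formula, elimination of the three derivative terms, rewriting the surviving bracket terms through $(X_a,X_b)_e=\langle X_{Da},X_b\rangle_e$, and moving the test function out of the Lagrange brackets by ad-invariance/integration by parts. The only difference is cosmetic: the paper applies the Koszul formula to the right-invariant Levi--Civita connection $\widetilde\nabla$ on $\mathcal{D}_\theta$ (invoking Ebin--Marsden for $(\widetilde\nabla_XY)_e=P_e(\nabla_{X_e}Y_e)$), so the terms $X(Y,Z)$, $Y(Z,X)$, $Z(X,Y)$ vanish by right-invariance of $(\cdot,\cdot)_e$, whereas you apply Koszul to $\nabla$ on $M$ and integrate, killing the same terms via $\div_\mu X_f=0$ --- these are the group-level and pointwise formulations of one and the same fact.
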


\begin{proof}
It was demonstrated in \cite{Eb-Mar} that, given the weak right-invariant Riemannian structure (\ref{(X,Y)-D-theta-11.4}) on $\mathcal{D}_{\theta}$, there exists a Riemannian connection whose covariant derivative $\widetilde{\nabla}$ at the identity $e\in \mathcal{D}_{\theta}$ is given by the formula
$$
(\widetilde{\nabla}_X Y)_e = P_e(\nabla_{X_e} Y_e),
$$
where $\nabla$ is the covariant derivative of the Riemannian connection on $M$ and $X(\eta)=X_e \circ \eta$, $Y(\eta)=Y_e \circ \eta$ are right-invariant vector fields on $\mathcal{D}_{\theta}$, $X_e, Y_e\in T_e\mathcal{D}_{\theta}$.
For determining $P_e(\nabla_{X_e} Y_e) = (\widetilde{\nabla}_X Y)_e$, we use the six-term formula
$$
2(\widetilde{\nabla}_X Y, Z)_e = X(Y,Z)+Y(Z,X)-Z(X,Y)+(Z,[X,Y])_e+(Y,[Z,X])_e-(X,[Y,Z])_e,
$$
where $X=X_f$, $Y=X_h$ and $Z=X_g$ are regarded as right-invariant vector fields on $\mathcal{D}_{\theta}$. Taking the right invariance of the weak Riemannian structure (\ref{(X,Y)-D-theta-11.4}) into account, we obtain $X(Y,Z)=Y(Z,X)=Z(X,Y)=0$. Using the bi-invariant scalar product (\ref{<Xf,Yg>-D-theta-11.5}), we obtain
$$
2(\widetilde{\nabla}_X Y, Z)_e =(X_g,[X_f,X_h])_e+(X_h,[X_g,X_f])_e-(X_f,[X_h,X_g])_e =
$$
$$
=(X_g,X_{[f,h]})_e+(X_h,X_{[g,f]})_e-(X_f,X_{[h,g]})_e =
\langle X_g,X_{D[f,h]}\rangle_e+\langle X_{Dh},X_{[g,f]}\rangle_e- \langle X_{Df},X_{[h,g]}\rangle_e =$$
$$
=\langle X_g,X_{D[f,h]}\rangle_e+\langle [X_f,X_{Dh}],X_{g}\rangle_e +\langle [X_h,X_{Df}],X_{g}\rangle_e =
$$
$$
=\langle X_{D[f,h]}+X_{[f,Dh]} + X_{[h,Df]},X_{g}\rangle_e.
$$
On the other hand, we have
$$
2(\widetilde{\nabla}_X Y, Z)_e =2(P_e(\nabla_{X}Y),Z)_e = 2(X_s,X_g)_e =2\langle X_{Ds},X_g\rangle_e.
$$
\end{proof}

\begin{corollary} \label{Cor11.2}
Let $X=X_f$ and $Y=X_h$. If $X_s= P_e \left(\nabla_X X \right)$ and $X_q =P_e \left(\nabla_X Y + \nabla_Y X\right)$, then
\begin{equation}
(1+\Delta) s =[f, \Delta f],\qquad (1+\Delta) q = [f,\Delta h]-[\Delta f,h]. 
\label{D-s-D-theta-11.9}
\end{equation}
\end{corollary}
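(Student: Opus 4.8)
The plan is to obtain both formulas in (\ref{D-s-D-theta-11.9}) directly from Lemma \ref{Lem11.1} by pure algebra, using only bilinearity of the Lagrange bracket, the identity $[f,f]=0$ (already used above in deriving the Euler equation), the skew-symmetry $[f,h]=-[h,f]$ of the Lagrange bracket on contact Hamiltonians, and the definition $D=1+\Delta$.

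First, for $X_s=P_e(\nabla_XX)$ I would specialize formula (\ref{D-s-D-theta-11.8}) to $Y=X$, i.e. $h=f$. Then the term $D[f,f]$ vanishes, the two remaining summands coincide, and one gets $Ds=[f,Df]$. Expanding $D=1+\Delta$ and using $[f,f]=0$ once more, $[f,Df]=[f,f]+[f,\Delta f]=[f,\Delta f]$, which is the first identity $(1+\Delta)s=[f,\Delta f]$.

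Next, for $X_q=P_e(\nabla_XY+\nabla_YX)$ I would use that $P_e$ is linear and that $s\mapsto X_s$ is linear, so with $X_{s_1}=P_e(\nabla_XY)$ and $X_{s_2}=P_e(\nabla_YX)$ one has $q=s_1+s_2$. Applying Lemma \ref{Lem11.1} twice, once with the ordered pair $(f,h)$ and once with $(h,f)$, and adding, the ``$D[f,h]$''-type terms cancel because $[h,f]=-[f,h]$, while the other four terms pair up to give $Dq=[f,Dh]+[h,Df]$. Finally, expanding $D=1+\Delta$, the undifferentiated brackets $[f,h]+[h,f]$ cancel and one is left with $(1+\Delta)q=[f,\Delta h]+[h,\Delta f]=[f,\Delta h]-[\Delta f,h]$, the second identity.

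There is no real obstacle here: once Lemma \ref{Lem11.1} is in hand the corollary is bookkeeping. The only point to handle carefully is the skew-symmetry of the Lagrange bracket on the Hamiltonians involved; it holds because all Hamiltonians occurring are constant along the Reeb field $\xi$ (equivalently, it follows from (\ref{[Xf,Xg]-11.3}) together with the injectivity of $f\mapsto X_f=\theta(X_f)$), and it is precisely this skew-symmetry that makes the antisymmetric part of (\ref{D-s-D-theta-11.8}) drop out under symmetrization and that again eliminates the spurious first-order terms after expanding $D$.
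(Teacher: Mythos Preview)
Your proposal is correct and is exactly what the paper intends: the corollary is stated without proof as an immediate consequence of Lemma~\ref{Lem11.1}, and you have simply spelled out the specialization $h=f$ and the symmetrization in $(f,h)$ together with the skew-symmetry of the Lagrange bracket. There is no difference in approach to report.
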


From (\ref{K-sigma-Eq-8.4}) and Lemma \ref{Lem11.1} immediately implies the following theorem.

\begin{theorem} \label{Th11.3}
The sectional curvature of the group $\mathcal{D}_{\theta}$ with respect to metric  (\ref{(X,Y)-D-theta-11.4}) in the direction  of  a  2-plane $\sigma\subset T_e\mathcal{D}_{\theta}$ composed  of  an  orthonormal  pair  of  contact  vector fields $X_f, X_h \in T_e\mathcal{D}_{\theta}$ is expressed by the formula
\begin{multline}
K_{\sigma}= \frac 14 \int_M [f,h]^2 d\mu - \frac 34 \int_M [f,h]\Delta[f,h]d\mu + {} \\*
+ \frac 12\int_M [f,h]([\Delta f,h]+ [f,\Delta h])d\mu - \int_M
[f,\Delta f]\ D^{-1}([h,\Delta h])d\mu + {} \\*
+ \frac 14\int_M ([f,\Delta h]-[\Delta f,h])\ D^{-1}([f,\Delta h]-[\Delta f,h]) d\mu , 
\label{K-sigma-D-theta-11.10}
\end{multline}
where $D=1+\Delta$, $\Delta =-\div\circ \grad$ is the Laplacian, and $[f,h]$ is the Lagrange bracket of functions on a contact manifold.
\end{theorem}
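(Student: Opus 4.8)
The plan is to obtain (\ref{K-sigma-D-theta-11.10}) by substituting the data supplied by Lemma \ref{Lem11.1} and Corollary \ref{Cor11.2} into the general curvature formula (\ref{K-sigma-Eq-8.4}) and reducing each of its five terms to an integral of Lagrange brackets over $M$. Two identities do all the work: the bracket homomorphism $[X_f,X_h]=X_{[f,h]}$ of (\ref{[Xf,Xg]-11.3}), and the passage between the two inner products $(X_a,X_b)_e=\langle X_{Da},X_b\rangle_e=\int_M (Da)\,b\,d\mu$ read off from (\ref{(Xf,Xg)-D-theta-11.6}), together with the fact, proved in Theorem \ref{Biinv-D-theta}, that $\langle\cdot,\cdot\rangle_e$ is $\Ad$-invariant, hence that $\ad_{X_f}$ is skew with respect to $\langle\cdot,\cdot\rangle_e$. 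I will also use $[f,f]=0$ on Hamiltonians constant along $\xi$, exactly as in the derivation of (\ref{Eq-9.9}), and the fact that $D=1+\Delta$ is self-adjoint with a well-defined inverse $D^{-1}$ on $C^\infty(M)$ (a positive invertible elliptic operator), so that the $D^{-1}$ appearing in (\ref{D-s-D-theta-11.9}) makes sense.

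First I would dispose of the three algebraic terms of (\ref{K-sigma-Eq-8.4}). For the first, $([X,Y],[X,Y])_e=(X_{[f,h]},X_{[f,h]})_e=\int_M [f,h]\,D[f,h]\,d\mu=\int_M[f,h]^2\,d\mu+\int_M[f,h]\,\Delta[f,h]\,d\mu$. For the second, $\ad$-skewness gives $([X,[X,Y]],Y)_e=\langle X_{[f,[f,h]]},X_{Dh}\rangle_e=-\langle X_{[f,h]},X_{[f,Dh]}\rangle_e=-\int_M[f,h]\,[f,Dh]\,d\mu=-\int_M[f,h]^2\,d\mu-\int_M[f,h]\,[f,\Delta h]\,d\mu$; the third term is obtained from this by exchanging $f$ and $h$ and, after using antisymmetry of the bracket, equals $-\int_M[f,h]^2\,d\mu-\int_M[f,h]\,[\Delta f,h]\,d\mu$. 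Forming the combination $-\tfrac34$ of the first together with $-\tfrac12$ of each of the other two, the coefficient of $\int_M[f,h]^2$ collapses to $-\tfrac34+\tfrac12+\tfrac12=\tfrac14$, and one reads off precisely the first two lines of (\ref{K-sigma-D-theta-11.10}).

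Next I would handle the two connection terms. By Corollary \ref{Cor11.2}, $P_e(\nabla_XX)=X_s$ with $s=D^{-1}[f,\Delta f]$, $P_e(\nabla_YY)=X_{s'}$ with $s'=D^{-1}[h,\Delta h]$, and $P_e(\nabla_XY+\nabla_YX)=X_q$ with $q=D^{-1}([f,\Delta h]-[\Delta f,h])$. Applying $(X_a,X_b)_e=\int_M (Da)\,b\,d\mu$ once more yields $-(P_e(\nabla_XX),P_e(\nabla_YY))=-\int_M [f,\Delta f]\,D^{-1}([h,\Delta h])\,d\mu$ and $\tfrac14(P_e(\nabla_XY+\nabla_YX),P_e(\nabla_XY+\nabla_YX))=\tfrac14\int_M([f,\Delta h]-[\Delta f,h])\,D^{-1}([f,\Delta h]-[\Delta f,h])\,d\mu$, which are the third and fourth lines of (\ref{K-sigma-D-theta-11.10}). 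Adding all five contributions gives the asserted formula.

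The only genuine care needed is sign bookkeeping: $[\cdot,\cdot]$ is antisymmetric on contact Hamiltonians while $\Delta$ and $D$ are symmetric, so the pair $[\Delta f,h]$ versus $[f,\Delta h]$ must be tracked through every application of $\ad$-skewness; and one should recall that Corollary \ref{Cor11.2} is nothing but Lemma \ref{Lem11.1} specialized at $h=f$ (using $[f,f]=0$) and then symmetrized in $f$ and $h$. No analytic difficulty beyond what is already needed to state (\ref{K-sigma-Eq-8.4}) — existence of the orthogonal projection $P_e$ and of the weak Riemannian connection $\widetilde\nabla$, both taken from Ebin--Marsden — enters the argument, so the theorem is essentially an assembly of the preceding results.
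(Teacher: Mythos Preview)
Your proposal is correct and follows exactly the route the paper indicates: the paper simply asserts that the theorem follows from the general curvature formula (\ref{K-sigma-Eq-8.4}) together with Lemma~\ref{Lem11.1}, and you have carried out in full the term-by-term substitution and reduction that this entails. Your handling of the three algebraic terms via the relation $(X_a,X_b)_e=\langle X_{Da},X_b\rangle_e$ and $\ad$-skewness of $\langle\cdot,\cdot\rangle_e$, and of the two connection terms via Corollary~\ref{Cor11.2}, is precisely what the paper leaves implicit.
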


In  the  case  where,   as  the  contact  Hamiltonians $f$ and $h$, we  choose  eigenfunctions  of  the  Laplace operator, $\Delta f=\alpha f$ and $\Delta h = \beta h$, formula (\ref{K-sigma-D-theta-11.10}) is considerably simpler:
\begin{equation}
K_{\sigma}=-\frac 34 \int_M [f,h]\Delta [f,g]\ d\mu + \frac {1+2(\alpha +\beta)}{4} \int_M [f,h]^2\ d\mu + \frac {(\alpha -\beta)^2}{4}\int_M [f,h]\ D^{-1}([f,h])d\mu. 
\label{K-sigma-D-theta-11.11}
\end{equation}

If the structural constants defined from
$$
[f,h] = c_{fh}^i f_i,
$$
where $f_i$ is the orthonormal system of eigenfunctions of the Laplace operator with eigenvalues $\alpha_i$, of the Lie algebra $T_e \mathcal{D}_\theta$ of contact vector fields are known, then the formula for the sectional curvatures becomes
\begin{equation}
K_{\sigma}=-\frac{1}{(1+\alpha)(1+\beta)}\left(-\frac 34
\sum_{i>0}\alpha_i (c_{fh}^i)^2+ \frac {1+2(\alpha +\beta)}{4} \sum_{i>0}(c_{fh}^i)^2+ \frac
{(\alpha -\beta)^2}{4}\sum_{i>0}\frac{(c_{fh}^i)^2}{1+\alpha_i}\right), 
\label{K-sigma-D-theta-11.12}
\end{equation}
here, it is assumed that the $L^2$-norms of the functions $f$ and $h$ are equal to 1.

Let us show that the calculation of the sectional curvatures of the group $\mathcal{D}_\theta$ by formula (\ref{K-sigma-D-theta-11.12}) reduces to the search for the structural constants of the Lie algebra of Hamiltonian vector fields on the symplectic manifold $N = M/S^1$. Recall  that  the  symplectic  structure  on $N$ is  defined  by  the  2-form $\omega$ uniquely found from the relation $\pi^* \omega = d\theta$, where $\pi$ is the projection of $M$ on $N = M/S^1$.

An exact contact transformation $\eta:M  \rightarrow M$ preserves the differential forms $\theta $ and $d\theta$. Therefore, the diffeomorphism $\eta$ preserves  the  characteristic  vector  field $\xi$ and  commutes  with  the  action  of $S^1$ on $M$. This implies that the contact diffeomorphism $\eta$ defines a diffeomorphism $\overline{\eta}$ of the manifold $N$ such that $\pi \circ \eta =\overline{\eta}\circ\pi$.  From the relation $\eta^* d\theta=d\theta$, we obtain $\overline{\eta}^*\omega = \omega$, i.e., $\overline{\eta}$ is a symplectic diffeomorphism of
the manifold $N$.  Therefore, we have obtained the homomorphism
$$
p: \mathcal{D}_{\theta} \rightarrow \mathcal{D}_\omega(N),\qquad
p(\eta) = \overline{\eta}.
$$
The  image $p(\mathcal{D}_{\theta,0})$ of  the  connected  component $\mathcal{D}_{\theta,0}$ of  the  identity  is  (see  \cite{Omo5}) a closed  ILH-Lie subgroup $\mathcal{G}$ of  exact  symplectic  transformations. Recall  that  the  Lie  algebra $\mathcal{H}=T_e\mathcal{G}$ of this  group consists of globally Hamiltonian vector fields on $N$.

   The sequence of groups
\begin{equation}
1 \rightarrow S^1 \rightarrow \mathcal{D}_{\theta,0} \rightarrow \mathcal{G} \rightarrow 1, 
\label{(exact-seq-D-theta-11.13}
\end{equation}
is exact. In the corresponding exact sequence of algebras
\begin{equation}
0 \rightarrow \mathbb{R} \rightarrow T_e \mathcal{D}_{\theta}
\rightarrow \mathcal{H} \rightarrow 0,      
\label{exact-seq-alg-theta-11.14}
\end{equation}
the  kernel  of  the  differential $dp$ is  the  one-dimensional  space  of  vector  fields  proportional  to  the  field $\xi$:\ $\Ker dp= \mathbb{R}\xi$.

In $T_e \mathcal{D}_{\theta}$, let us find the orthogonal complement to the kernel of the differential $dp$ with respect to the inner product (\ref{(X,Y)-D-theta-11.4}). The contact Hamiltonian of the characteristic vector field $\xi$ is identically equal to 1.
Let $X_f$ be any other contact vector field; then
$$
(\xi,X_f)=\int_M(1+\Delta(1))f\ d\mu = \int_M\ f\ d\mu =0.
$$
Thus, the contact vector field $X_f$ lies in the orthogonal complement to the vector field $\xi$  iff
$$
\int_M\ f\ d\mu =0.
$$

As we already noted, a contact vector field on $M$ has the form
$$
X_f = f\xi - \varphi\ \grad f,
$$
where $f$ is a smooth function on $M$ that is constant on the trajectories of the vector field $\xi$.
Therefore, it defines the function $F$ on $N$ by $f = F\circ \pi$.  Under the projection $\pi:M \rightarrow N$, the contact vector field $X_f$ is mapped into the Hamiltonian vector field $X_F$,\ $X_F = d\pi(X_f)$.  In this case, the Lie bracket $[X_f,X_h]$ passes to the Lie bracket $[X_F,X_H]$ of the corresponding Hamiltonian vector fields $X_F,\ X_H$ on $N$\ \cite{Ko-No}.
Therefore, the Poisson bracket $\{F,H\}$ on $N$ corresponds to the Lagrange bracket $[f,h]$: $[f,h]=\{F,H\}\circ \pi$.

Note  that if $X_f$ lies  in  the  orthogonal  complement  to $\mathbb{R}\xi$ in $T_e \mathcal{D}_{\theta}$, then  the Hamiltonian $F$ on $N$ corresponding to $f$ has the property $\int_M\ F\ d\mu =0$.

It follows from the above arguments that the structural constants $c_{fh}^i$ of the Lie algebra $T_e \mathcal{D}_{\theta}$,
$$
[f,h]=c_{fh}^i f_i,\qquad   X_f, X_h\in (\mathbb{R}\xi)^\bot,
$$
coincide with the structural constants $C_{FH}^i$ of the algebra $\mathcal{H}$,
$$
\{F,H\} =C_{FH}^i F_i.
$$

\textbf{Conclusion.}  \emph{The  structural  constants $C_{FH}^i$ of  the  algebra $H$ of Hamiltonian  vector  fields  on $N$ can be used  in  formula  (\ref{K-sigma-D-theta-11.12}) instead  of the  structural  constants $c_{fh}^i$ of  the  Lie  algebra  of  contact  vector  fields on $M$ for calculating the sectional curvatures of the group $\mathcal{D}_{\theta}$ in directions $\sigma$ orthogonal to the field $\xi$.}
\vspace{2mm}

\textbf{Remark 2.} Since  the  contact  vector  fields $X_f$ commute  with  the  characteristic  vector  field $\xi$,  i.e., $[\xi,X_f]=0$, it follows that the other structural constants of $T_e\mathcal{D}_{\theta}$ of the form $c_{\xi h}^i$ vanish. Therefore,  we have the following conclusion:

\emph{   the  sectional   curvature  of the group $\mathcal{D}_{\theta}$ in  directions $\sigma$ containing the characteristic  vector field $\xi$ vanishes.}

\vspace{2mm}
\textbf{Remark 3.}  Assume  that  the  associated  metric $g$ on $M$ is $K$-contact. This  means  that  the  metric tensor $g$ is  invariant  with  respect  to  the  action  of $S^1$ on $M$,\ $L_{\xi} g = 0$\ \cite{Bla1}.  Then on  $N=M/S^1$, we can  define  the  Riemannian  structure $\overline{g}$ with  respect  to  which  the  projection $\pi:M \rightarrow N=M/S^1$ is a Riemannian submersion.  If the contact Hamiltonian $f$ on $M$ is an eigenfunction of the Laplace operator, $\Delta f = \lambda f$, then the corresponding Hamiltonian $F$ on $N$ is also an eigenfunction of the Laplace operator on $N$, $\Delta F = \lambda F$. In the general case, this is not true. Therefore, the use of the structural constants $C_{FH}^i$ of the algebra $\mathcal{H}(N)$ for calculating the sectional curvatures of the group $\mathcal{D}_{\theta}$ can be especially effectively used in the case of the $K$-contact structure.
Therefore, we can calculate the sectional curvatures of the group $\mathcal{D}_{\theta}$ of contact transformations of the three-dimensional unit sphere $S^3$ in the basis corresponding to  the  Legendre  functions $Y_{lm}(z,\varphi)$ on  the  sphere $S^2$. The  corresponding  bundle $S^3 \rightarrow S^2$ is the Hopf bundle in this case.

\section{Case of a three-dimensional manifold $M^3$}\label{Bi-inv-Contact-Diff-M3}
Let $M$ be a three-dimensional contact manifold. In this case, the contact metric structure on $M$ also has the following properties in addition to properties (1)--(9) given in the beginning of this paper:

{\rm (10)} $\theta\wedge d\theta =\mu$ is the Riemannian volume element on $M$;

{\rm (11)} $d\theta(\varphi X,X)=1$ for any unit vector field $X$ belonging to the distribution $E$;

{\rm (12)} the triple of vectors $(\varphi X,X,\xi)$, $X \in E_x$ composes a positively oriented basis of the space $T_xM$;

{\rm (13)} $\varphi X = X\times \xi$, where $\times$ is the vector product on the three-dimensional Riemannian manifold $M$;

{\rm (14)} $*\theta = d\theta$,\  $*d\theta=\theta$, where $*$ is the Hodge operator.

The exact contact transformation $\eta$ of the manifold $M$ preserves the forms $\theta$ and $d\theta$ and hence preserves the volume element $\mu=\theta\wedge d\theta$. Therefore, the group $\mathcal{D}_\theta$ of exact contact diffeomorphisms is a subgroup of the group $\mathcal{D}_\mu(M^3)$ of diffeomorphisms preserving the volume element $\mu$.

Let $\mathcal{D}_{\mu\,\partial}(M^3)$ be the group of exact diffeomorphisms preserving the volume element $\mu$. The Lie algebra $T_e\mathcal{D}_{\mu\,\partial}(M^3)$ consists of  vector fields $X$ of a type $X=\rot Y$.
We will prove that if the contact structure $(M^3,\theta,\xi,g)$ is $K$-contact, then the group $\mathcal{D}_\theta$ is a subgroup in $\mathcal{D}_{\mu\,\partial}(M^3)$.

\begin{lemma} \label{Le11.4}
The characteristic vector field $\xi$ is an eigenvector of the operator $\rot$,
\begin{equation}
\rot\, \xi  = \xi.  
\label{rot-xi-D-theta-11.16}
\end{equation}
\end{lemma}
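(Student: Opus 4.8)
The plan is to reduce the identity $\rot\,\xi=\xi$ to the two facts about the associated metric structure already recorded above, namely property (1) and property (14). Recall that on the oriented three-dimensional Riemannian manifold $(M,g)$ with volume form $\mu=\theta\wedge d\theta$ the operator $\rot$ is characterized, for a vector field $Y$, by
$$
(\rot\,Y)^\flat = *\,d\!\left(Y^\flat\right),
$$
or equivalently by $i_{\rot Y}\mu = d(Y^\flat)$, where $Y^\flat=g(Y,\cdot)$ is the $1$-form metrically dual to $Y$ and $*$ is the Hodge operator.

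First I would identify the $1$-form dual to the characteristic field: by property (1), $g(\xi,X)=\theta(X)$ for every vector field $X$, hence $\xi^\flat=\theta$ and therefore $d(\xi^\flat)=d\theta$. Then, applying the first description of $\rot$ together with property (14), which gives $*\,d\theta=\theta$, I obtain
$$
(\rot\,\xi)^\flat = *\,d\!\left(\xi^\flat\right) = *\,d\theta = \theta = \xi^\flat,
$$
and since $Y\mapsto Y^\flat$ is an isomorphism this yields $\rot\,\xi=\xi$. As an alternative route one can argue directly from the interior-product characterization: using $\theta(\xi)=1$ and $d\theta(\xi,\cdot)=0$ one computes $i_\xi\mu = i_\xi(\theta\wedge d\theta) = \theta(\xi)\,d\theta - \theta\wedge i_\xi d\theta = d\theta = d(\xi^\flat)$, which again gives $\rot\,\xi=\xi$.

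I do not expect a genuine obstacle here: the content of the lemma is precisely the Hodge-duality relation $*\,d\theta=\theta$ of property (14) rewritten in the language of vector fields via $\xi^\flat=\theta$. The only point requiring a little care is fixing the sign convention for $\rot$ and noting that on $1$-forms of a Riemannian $3$-manifold $**=\id$, which is what makes the two characterizations of $\rot$ above agree and makes property (14) internally consistent (indeed $*\theta=d\theta$ already forces $*\,d\theta=**\theta=\theta$).
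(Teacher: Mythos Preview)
Your argument is correct and is essentially identical to the paper's: both identify $\xi^\flat=\theta$ via property~(1), apply the definition $(\rot Y)^\flat=*\,d(Y^\flat)$, and invoke property~(14) to get $*\,d\theta=\theta$. The alternative interior-product computation you add is a nice bonus but not used in the paper.
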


\begin{proof}
Indeed, let $\omega_X =g^{-1}X$ be  the  differential  1-form  corresponding  to  the  vector  field $X$ on $M$ by  the metric  tensor $g$. Then $\rot X$ is  found  from  the  relation $\omega_{\rot X}=*d\omega_X$, where $*$ is  the  Hodge  operator.
For the characteristic vector field, we have $\omega_\xi = \theta$.  Therefore,
$$
\omega_{\rot \xi}= *d\omega_\xi = * d\theta = \theta = \omega_\xi.
$$
\end{proof}

For the $K$-contact  structure,  the  metric  tensor $g$ and  the  affinor $\varphi$ are  invariant  with  respect  to  the action of  the  group $S^1$ generated  by  the  field $\xi$: $L_\xi g=0$, $L_\xi\varphi=0$,  on $M$.  As is known,  the  contact vector  fields  commute  with  the  field $\xi$, $[\xi ,X_f]=0$. Taking  into  account  that $X_f=f\xi -\varphi\, \grad\, f$ and $\xi(f)=0$,  we
immediately obtain from the latter relation that $[\xi ,\varphi\, \grad\, f]=0$.
In the case of the $K$-contact structure, it is easy to show that the gradients of the contact Hamiltonians $f$ also commute with $\xi$:
$$
[\,\xi, \grad\, f] = 0.
$$
Indeed, since $\grad\ f \in \Gamma(E)$, it follows:
$[\xi, \grad\, f] =[\xi,-\varphi^2 \grad\, f]= -L_\xi \varphi(\varphi\,
\grad\ f) -\varphi(L_\xi (\varphi\,\grad\, f))= -\varphi(L_\xi (\varphi\,\grad\, f)) = -\varphi[\,\xi,\varphi\,\grad\, f] = 0.$

In what follows, we assume that the contact metric structure $(M^3,\theta,\xi,g)$ is $K$-contact.

\begin{lemma} \label{Le11.5}
If $X_f = f\xi - \varphi\, \grad\ f$ is a contact vector field on $M^3$, then
\begin{equation}
\rot X_f = (f-\Delta f)\,\xi + \varphi\, \grad f, 
\label{rot-Xf-11.17}
\end{equation}
where $\Delta =-\div\circ\grad$ is the Laplacian. In particular,
$$
\rot f\xi = f\,\xi + \varphi\, \grad f,\qquad  \rot (\varphi\, \grad f) = \Delta f\,\xi.
$$
\end{lemma}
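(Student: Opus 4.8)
The plan is to use linearity of $\rot$ together with the characterization $\omega_{\rot X}=*\,d\omega_X$ from the proof of Lemma~\ref{Le11.4}, reducing the claim to computing $\rot(f\xi)$ and $\rot(\varphi\,\grad f)$; the general formula then follows by writing $X_f=f\xi-\varphi\,\grad f$ and subtracting.

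\emph{First}, I would record the basic identity $\omega_{\varphi\,\grad f}=*(df\wedge\theta)=i_\xi(*df)$: the first equality holds because $\varphi\,\grad f=\grad f\times\xi$ (property~(13)) and $\omega_{u\times v}=*(\omega_u\wedge\omega_v)$ in dimension~$3$, and the second because $i_X(*\alpha)=*(\alpha\wedge\omega_X)$ for a $1$-form $\alpha$, with $\omega_\xi=\theta$. Then for $\rot(f\xi)$: since $\omega_{f\xi}=f\theta$, we have $d\omega_{f\xi}=df\wedge\theta+f\,d\theta$, so by property~(14) ($*d\theta=\theta$), $\omega_{\rot(f\xi)}=*(df\wedge\theta)+f\theta=\omega_{\varphi\,\grad f}+\omega_{f\xi}$, i.e. $\rot(f\xi)=f\xi+\varphi\,\grad f$.

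\emph{Second}, for $\rot(\varphi\,\grad f)$, Cartan's formula together with the identity above gives $d\omega_{\varphi\,\grad f}=d\,i_\xi(*df)=L_\xi(*df)-i_\xi\bigl(d(*df)\bigr)$. This is where the $K$-contact hypothesis is essential: since $L_\xi g=0$ and $L_\xi\mu=L_\xi(\theta\wedge d\theta)=0$, the local flow of $\xi$ acts by orientation-preserving isometries, so $L_\xi$ commutes with the Hodge operator; hence $L_\xi(*df)=*\,d(L_\xi f)=*\,d(\xi f)=0$, because $f$, being a contact Hamiltonian, is constant along $\xi$. For the remaining term I would use the Hodge identity $d(*df)=-(\Delta f)\,\mu$ (in dimension~$3$, with the convention $\Delta=-\div\circ\grad$) together with $i_\xi\mu=i_\xi(\theta\wedge d\theta)=d\theta$, to get $i_\xi\bigl(d(*df)\bigr)=-(\Delta f)\,d\theta$. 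Thus $d\omega_{\varphi\,\grad f}=(\Delta f)\,d\theta$, and applying $*$ with $*d\theta=\theta=\omega_\xi$ gives $\omega_{\rot(\varphi\,\grad f)}=(\Delta f)\,\omega_\xi$, i.e. $\rot(\varphi\,\grad f)=(\Delta f)\,\xi$.

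\emph{Finally}, by linearity $\rot X_f=\rot(f\xi)-\rot(\varphi\,\grad f)=(f-\Delta f)\,\xi+\varphi\,\grad f$. The main obstacle is the second step: one must carefully justify that $L_\xi$ commutes with $*$ — precisely where $K$-contactness is used, and where the statement would fail for a general associated metric — and one must track the sign in $d(*df)=-(\Delta f)\mu$ so that it matches the convention $\Delta=-\div\circ\grad$; the remaining manipulations of interior products, Lie derivatives and the Hodge star in three dimensions are routine.
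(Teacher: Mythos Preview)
Your argument is correct, but it takes a different route from the paper. The paper works entirely with three-dimensional vector-calculus identities: writing $X_f=f\xi+\xi\times\grad f$ (using $\varphi X=X\times\xi$), it applies $\rot(fV)=\grad f\times V+f\,\rot V$ and $\rot(X\times Y)=[Y,X]+X\,\div Y-Y\,\div X$ in one line, then invokes $\rot\xi=\xi$ (Lemma~\ref{Le11.4}), $\div\xi=0$, and the $K$-contact consequence $[\xi,\grad f]=0$ established just before the lemma. Your proof instead stays with the differential-form characterization $\omega_{\rot X}=*\,d\omega_X$, handles $f\xi$ and $\varphi\,\grad f$ separately via $\omega_{\varphi\,\grad f}=i_\xi(*df)$ and Cartan's formula, and isolates the $K$-contact input as the commutation of $L_\xi$ with~$*$. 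The paper's calculation is shorter and more ``classical''; your approach has the virtue of making the two special formulas $\rot(f\xi)=f\xi+\varphi\,\grad f$ and $\rot(\varphi\,\grad f)=(\Delta f)\xi$ emerge as separate computations, and of pinpointing exactly where and why the $K$-contact hypothesis is needed (namely $L_\xi\circ *=*\circ L_\xi$), whereas in the paper that hypothesis is hidden inside the preparatory fact $[\xi,\grad f]=0$.
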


\begin{proof}
$$
\rot X_f = \rot(f\xi + \xi\times\grad f) =
\grad f\times \xi + f \rot\xi +[\grad f,\xi] + \xi\div(\grad\,
f)-\grad f\, \div \xi =
$$
$$
=\varphi\, \grad f + f\xi +(-\Delta f)\xi = (f -\Delta f)\xi + \varphi\, \grad f.
$$
Here, we have used the relations $\varphi X = X\times \xi$, $\div\, \xi = 0$, $[\xi,\grad f]=0$ and $\rot(X\times Y) = [Y,X] +X \div Y - Y \div X$.
\end{proof}

In  the  case  of  the  $K$-contact  metric,  the  operator $\rot^{-1}$ can  be  explicitly  calculated  on  the  contact vector fields.

\begin{lemma} \label{Le11.6}
If the contact vector field $X_f = f\xi - \varphi\, \grad f\in T_e \mathcal{D}_{\theta}(M^3)$ lies in the orthogonal complement to the vector field $\xi$ with respect to the inner product (\ref{(X,Y)-D-theta-11.4}), that is $\int_M f \,d\mu_g =0$, then
\begin{equation}
\rot^{-1} X_f =-f\xi +2\varphi\, \grad (\Delta^{-1} f), 
\label{rot(-1)Xf-11.18}
\end{equation}
where $h=\Delta^{-1} f$ there is a function, such that $\Delta h=f$, $\xi(h)=0$ and $\int_M h \,d\mu_g =0$.
\end{lemma}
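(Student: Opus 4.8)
The plan is to prove the formula by direct verification: exhibit the candidate field $Z=-f\xi+2\varphi\,\grad(\Delta^{-1}f)$, apply Lemma~\ref{Le11.5} to compute $\rot Z$, and then check that $Z$ is the field singled out by $\rot^{-1}$.

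First I would set up $h:=\Delta^{-1}f$. Since $M$ is compact without boundary and $\Delta=-\div\circ\grad\ge 0$, the image of $\Delta$ on $C^\infty(M)$ is the $L^2$-orthogonal complement of the constants, i.e.\ the functions of zero mean; the hypothesis $\int_M f\,d\mu_g=0$ therefore produces a unique $h\in C^\infty(M)$ with $\Delta h=f$ and $\int_M h\,d\mu_g=0$. Next I would check $\xi(h)=0$, which is exactly where the $K$-contact assumption enters: from $L_\xi g=0$ the Laplacian commutes with $L_\xi=\xi(\cdot)$ on functions, so $\Delta(\xi(h))=\xi(\Delta h)=\xi(f)=0$, the last equality because $f=\theta(X_f)$ is a contact Hamiltonian and hence constant along the trajectories of $\xi$; moreover $\int_M\xi(h)\,d\mu_g=\int_M L_\xi h\,d\mu_g=0$ since $\div\xi=0$. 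By the uniqueness just noted, $\xi(h)=0$; in particular $\grad h\in\Gamma(E)$ and $[\xi,\grad h]=0$, so Lemma~\ref{Le11.5} is legitimately applicable to $h$ as well as to $f$.

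Now the computation. By the ``in particular'' part of Lemma~\ref{Le11.5}, $\rot(f\xi)=f\xi+\varphi\,\grad f$ and $\rot(\varphi\,\grad h)=\Delta h\,\xi=f\xi$, hence
$$
\rot\bigl(-f\xi+2\varphi\,\grad h\bigr)=-\bigl(f\xi+\varphi\,\grad f\bigr)+2f\xi=f\xi-\varphi\,\grad f=X_f .
$$
Thus $Z=-f\xi+2\varphi\,\grad h$ is a preimage of $X_f$ under $\rot$. To see it is the preimage returned by $\rot^{-1}$, I would note that $Z$ is divergence-free — $\div(f\xi)=\xi(f)+f\div\xi=0$, and $\div(\varphi\,\grad h)=\div(\grad h\times\xi)=g(\rot\grad h,\xi)-g(\grad h,\rot\xi)=-\xi(h)=0$ — and that $Z$ is $L^2$-orthogonal to every gradient field, since $\int_M g(Z,\grad p)\,d\mu_g=-\int_M(\div Z)\,p\,d\mu_g=0$; as $\Ker\rot$ consists of the fields whose dual $1$-form is closed, this identifies $Z$ with $\rot^{-1}X_f$ once one also knows $Z$ is orthogonal to the harmonic fields.

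The computation itself is immediate from Lemma~\ref{Le11.5}; the points that deserve care are the elliptic solvability of $\Delta h=f$ (by compactness of $M$) together with the deduction $\xi(h)=0$ (by the $K$-contact condition), and the clean identification of $Z$ with $\rot^{-1}X_f$. For the latter I would use that on a regular $K$-contact manifold the harmonic $1$-forms are basic, hence annihilate $\xi$, so they pair to zero in $L^2$ against $Z=-f\xi+2\varphi\,\grad h$; consequently $\Ker\rot$ is $L^2$-orthogonal to $Z$, which lies in the image of $\rot$, and $Z=\rot^{-1}X_f$ exactly. (When $H^1_{dR}(M)=0$ there are no harmonic fields and this last point is vacuous.)
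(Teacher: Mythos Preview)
Your proof is correct and follows essentially the same route as the paper: a direct verification that applying $\rot$ to the candidate field $Z=-f\xi+2\varphi\,\grad(\Delta^{-1}f)$ returns $X_f$, together with the observation that $Z$ is divergence-free. The only differences are cosmetic: you invoke the ``in particular'' formulas of Lemma~\ref{Le11.5} to read off $\rot Z$, whereas the paper redoes the computation from scratch using $\varphi X=X\times\xi$ and the identity $\rot(X\times Y)=[Y,X]+X\,\div Y-Y\,\div X$; and you supply justifications the paper leaves implicit (solvability of $\Delta h=f$ under the zero-mean hypothesis, the deduction $\xi(h)=0$ from the $K$-contact assumption, and the identification of $Z$ as the canonical $\rot^{-1}X_f$ via orthogonality to $\Ker\rot$).
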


\begin{proof}
A  direct  verification of the   relation $\rot(\rot^{-1} X_f)=X_f$:
$$
\rot (\rot^{-1} X_f) = \rot(-f\,\xi +2\,\grad (\Delta^{-1} f)\times \xi) =
$$
$$
=-\grad f\times \xi -f\, \rot\xi +2\,[\, \xi, \grad (\Delta^{-1} f)] +2\,\grad (\Delta^{-1} f)\, \div \xi -2\, \xi\,\div(\grad\,(\Delta^{-1} f)) =
$$
$$
=-\varphi\, \grad f -f\, \xi +2\,\xi\, (\Delta)(\Delta^{-1} f)
=-\varphi\, \grad f -f\, \xi +2\,\xi\, f=f\, \xi-\varphi\, \grad f =X_f.
$$
Here, we have used the relations: $\varphi X = X\times \xi$, $\div \xi = 0$, $[\xi,\grad f]=0$ and $\rot(X\times Y) = [Y,X] +X \div Y - Y \div X$.
Note   that   the  vector   field   $-f\xi +2\varphi \grad (\Delta^{-1} f)$ is divergence-free.
\end{proof}

\begin{corollary} \label{cor1}
If the contact vector field $X_f = f\xi - \varphi\, \grad f\in T_e \mathcal{D}_{\theta}(M^3)$ lies in the orthogonal complement to the vector field $\xi$, that is $\int_M f
 \,d\mu_g =0 $, then
\begin{equation}
X_f = \rot (-f\xi +2\varphi\, \grad (\Delta^{-1} f)). 
\label{Xf=rot}
\end{equation}
\end{corollary}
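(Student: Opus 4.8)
The plan is to obtain (\ref{Xf=rot}) as an immediate consequence of Lemma~\ref{Le11.6}. First I would check that the right-hand side of (\ref{Xf=rot}) even makes sense: the hypothesis $\int_M f\,d\mu_g=0$ guarantees, on the closed manifold $M^3$, the solvability of the Poisson equation $\Delta h=f$, and normalizing by $\int_M h\,d\mu_g=0$ fixes $h=\Delta^{-1}f$ uniquely. Since $X_f$ is a contact vector field we have $\xi(f)=0$, and in the $K$-contact case the Laplacian commutes with the $S^1$-action generated by $\xi$ (a consequence of $[\xi,\grad f]=0$, established above), so the normalized solution $h$ automatically satisfies $\xi(h)=0$. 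Hence $-f\xi+2\varphi\,\grad(\Delta^{-1}f)$ is a well-defined smooth divergence-free vector field on $M^3$, exactly as recorded at the end of the proof of Lemma~\ref{Le11.6}.

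Then I would simply apply $\rot$ to both sides of the inversion formula (\ref{rot(-1)Xf-11.18}): from $\rot^{-1}X_f=-f\xi+2\varphi\,\grad(\Delta^{-1}f)$ one gets $X_f=\rot\bigl(-f\xi+2\varphi\,\grad(\Delta^{-1}f)\bigr)$, which is precisely (\ref{Xf=rot}). Equivalently, one may just invoke the term-by-term computation already carried out inside the proof of Lemma~\ref{Le11.6}, where the identity $\rot\bigl(-f\xi+2\varphi\,\grad(\Delta^{-1}f)\bigr)=X_f$ was verified directly using $\varphi X=X\times\xi$, $\div\xi=0$, $[\xi,\grad f]=0$ and $\rot(X\times Y)=[Y,X]+X\div Y-Y\div X$.

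I do not expect any genuine obstacle here: the corollary is nothing more than a rephrasing of the inversion formula of Lemma~\ref{Le11.6} in the form ``$X_f$ lies in the image of $\rot$ restricted to divergence-free fields,'' i.e.\ $X_f\in T_e\mathcal{D}_{\mu\,\partial}(M^3)$. The only point deserving a line of comment is that the hypothesis $\int_M f\,d\mu_g=0$ is exactly what makes $\Delta^{-1}f$ available, so that the right-hand side of (\ref{Xf=rot}) is meaningful; this is the same hypothesis under which Lemma~\ref{Le11.6} was stated, so nothing new needs to be proved.
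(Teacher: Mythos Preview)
Your proposal is correct and matches the paper's approach exactly: the paper gives no separate proof of Corollary~\ref{cor1}, treating it as an immediate restatement of Lemma~\ref{Le11.6} (the identity $\rot\bigl(-f\xi+2\varphi\,\grad(\Delta^{-1}f)\bigr)=X_f$ is precisely what was verified line by line in that lemma's proof). Your additional remarks on the solvability of $\Delta h=f$ and the condition $\xi(h)=0$ are appropriate elaborations of hypotheses the paper simply records without justification.
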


From the lemma \ref{Le11.4} and corollary \ref{cor1} follows:

\begin{corollary} \label{Subgroup}
If the contact structure $(M^3,\theta,\xi,g)$ is $K$-contact, then the group $\mathcal{D}_\theta$ is a subgroup in $\mathcal{D}_{\mu\,\partial}(M^3)$.
\end{corollary}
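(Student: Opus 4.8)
The plan is to prove the corollary in two stages: first establish the infinitesimal inclusion $T_e\mathcal{D}_\theta \subseteq T_e\mathcal{D}_{\mu\,\partial}(M^3)$, i.e.\ that every contact vector field is of the form $\rot Y$; then upgrade this to an inclusion of groups, using that $\mathcal{D}_\theta$ is connected.

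For the first stage I would argue as follows. Let $X_f = f\xi - \varphi\,\grad f$ be an arbitrary contact vector field. Put $c = \Vol(M)^{-1}\int_M f\,d\mu$ and $f_0 = f - c$, so that $\int_M f_0\,d\mu = 0$ and $X_{f_0} = X_f - c\,\xi$ is again a contact vector field. By Corollary \ref{cor1} applied to $X_{f_0}$ one gets $X_{f_0} = \rot\bigl(-f_0\,\xi + 2\varphi\,\grad(\Delta^{-1}f_0)\bigr)$, and by Lemma \ref{Le11.4} one has $\xi = \rot\,\xi$; adding these, $X_f = \rot\bigl(-f_0\,\xi + 2\varphi\,\grad(\Delta^{-1}f_0) + c\,\xi\bigr)$, which shows $X_f \in T_e\mathcal{D}_{\mu\,\partial}(M^3)$. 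Thus every contact vector field is exact divergence-free.

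For the second stage, recall that $\mathcal{D}_\theta$ is connected (it is the identity component of the group of exact contact transformations), so every $\eta\in\mathcal{D}_\theta$ is the endpoint of a smooth path $\eta_t$ in $\mathcal{D}_\theta$ issuing from $\id$, whose time-dependent velocity fields $X_{f_t} = \dot\eta_t\circ\eta_t^{-1}$ are contact vector fields. By the first stage each $X_{f_t} = \rot Y_t$, and in three dimensions $\iota_{\rot Y_t}\mu = \pm\,d\,\omega_{Y_t}$ with $\omega_{Y_t} = g^{-1}Y_t$, so each $2$-form $\iota_{X_{f_t}}\mu$ is exact. Hence the flux class $\int_0^1 [\iota_{X_{f_t}}\mu]\,dt \in H^2_{dR}(M)$ of the isotopy $\eta_t$ vanishes, which is exactly the condition for $\eta$ to lie in $\mathcal{D}_{\mu\,\partial}(M^3)$; since $\mathcal{D}_\theta$ is a group this exhibits it as a subgroup of $\mathcal{D}_{\mu\,\partial}(M^3)$. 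I expect the only delicate point to be this last passage from the infinitesimal inclusion to the group inclusion: it requires identifying $\mathcal{D}_{\mu\,\partial}(M^3)$ with the kernel of the flux homomorphism — equivalently, as the ILH-subgroup integrating the algebra of exact divergence-free fields in the sense of \cite{Omo5} — after which everything reduces to the already-proved Lemma \ref{Le11.4} and Corollary \ref{cor1}.
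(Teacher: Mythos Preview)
Your proposal is correct and uses exactly the ingredients the paper cites --- Lemma \ref{Le11.4} and Corollary \ref{cor1} --- to obtain the Lie-algebra inclusion; the paper's own ``proof'' is nothing more than the sentence that the corollary follows from those two results. Your Stage 2, upgrading the infinitesimal inclusion to a group-level statement via the flux homomorphism, supplies a justification the paper leaves entirely implicit.
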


On the group $\mathcal{D}_{\mu\,\partial}(M^3)$ of exact diffeomorphisms preserving the volume element $\mu$ there exists the bi-invariant weak Riemannian structure \cite{Smo8}:
\begin{equation}
\left < X,Y \right >_e^{D_\mu} =\int_M g\left( \rot^{-1}X,Y \right)\ d\mu,\qquad X,Y\in T_e\mathcal{D}_\mu(M^3). 
\label{<X,Y>-D-mu}
\end{equation}


Therefore the group $\mathcal{D}_{\theta} \subset  \mathcal{D}_{\mu\,\partial}(M^3)$ inherits one more bi-invariant form
\begin{equation}
\left < X_f,X_h \right >_e^{D_\mu} =\int_M g\left( \rot^{-1}X_f,X_h \right)\ d\mu(x). 
\label{<X,Y>-D-theta-11.5}
\end{equation}


\begin{theorem} \label{Th11.7}
If the contact  metric  structure  on $M^3$ is $K$-contact,  then  the  bi-invariant  inner  products (\ref{<X,Y>-D-theta-11.5}) and  (\ref{<Xf,Yg>-D-theta-11.5}) on $\mathcal{D}_{\theta}(M^3)$  are connected by following relations:

If contact vector fields $X_f, X_h\in T_e \mathcal{D}_{\theta}(M^3)$ lies in the orthogonal complement to the vector field $\xi$ with respect to the inner product (\ref{(X,Y)-D-theta-11.4}), that is $\int_M f \,d\mu_g = \int_M h \,d\mu_g=0$, then
\begin{equation}
\left < X_f,X_h \right >_e^{D_\mu} = -3 \left < X_f,X_h \right >_e^{D_\theta}. 
\label{Eq-11.19}
\end{equation}
If  $X_f=\xi$ then for all $X_h\in T_e \mathcal{D}_{\theta}(M^3)$,
\begin{equation}
\left <\, \xi,X_h \right >_e^{D_\mu} =  \left <\, \xi,X_h \right >_e^{D_\theta}. 
\label{Eq-11.19xi}
\end{equation}
In particular,
$$
\left < \xi, \xi \right >_e^{D_\mu} =\left < \xi,\xi \right >_e^{D_\theta}=
\int_M  d\mu,
$$
and if $\int_M h \,d\mu_g=0$, then $\left < \xi, X_h \right >_e^{D_\mu} =\left < \xi,X_h \right >_e^{D_\theta}=0$.
\end{theorem}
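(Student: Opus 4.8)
The plan is to reduce both assertions to short direct computations built on the closed-form expressions for $\rot^{-1}$ on contact vector fields obtained above. First I note that the pairing $\langle X_f,X_h\rangle_e^{D_\mu}$ in (\ref{<X,Y>-D-theta-11.5}) is indeed defined on $T_e\mathcal{D}_\theta(M^3)$: by Corollary~\ref{Subgroup} every contact vector field lies in $T_e\mathcal{D}_{\mu\,\partial}(M^3)$, and Lemma~\ref{Le11.4} together with Lemma~\ref{Le11.6} pins down $\rot^{-1}$ on it explicitly. Throughout I use the orthogonal splitting $TM=\mathbb{R}\xi\oplus E$ with $g(\xi,\xi)=1$ and $\xi\perp E$, the fact (property~(7)) that $\varphi|_E$ is a $g$-isometry, and that $\grad f\in\Gamma(E)$.

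For (\ref{Eq-11.19}), assume $\int_M f\,d\mu=\int_M h\,d\mu=0$. By Lemma~\ref{Le11.6}, $\rot^{-1}X_f=-f\xi+2\varphi\,\grad(\Delta^{-1}f)$, so
\begin{equation*}
\langle X_f,X_h\rangle_e^{D_\mu}=\int_M g\bigl(-f\xi+2\varphi\,\grad(\Delta^{-1}f),\ h\xi-\varphi\,\grad h\bigr)\,d\mu .
\end{equation*}
The two mixed terms vanish because $\xi\perp E$; the remaining two, using $g(\varphi\,\grad(\Delta^{-1}f),\varphi\,\grad h)=g(\grad(\Delta^{-1}f),\grad h)$, give
\begin{equation*}
\langle X_f,X_h\rangle_e^{D_\mu}=-\int_M fh\,d\mu-2\int_M g\bigl(\grad(\Delta^{-1}f),\grad h\bigr)\,d\mu .
\end{equation*}
Integration by parts on the closed manifold $M$ (recall $\Delta=-\div\circ\grad$) turns the last integral into $\int_M\Delta(\Delta^{-1}f)\,h\,d\mu=\int_M fh\,d\mu$, whence $\langle X_f,X_h\rangle_e^{D_\mu}=-3\int_M fh\,d\mu=-3\langle X_f,X_h\rangle_e^{D_\theta}$.

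For (\ref{Eq-11.19xi}), the field $\xi$ has contact Hamiltonian $\equiv 1$, and by Lemma~\ref{Le11.4} we have $\rot\xi=\xi$, hence $\rot^{-1}\xi=\xi$. Therefore for every $X_h=h\xi-\varphi\,\grad h$,
\begin{equation*}
\langle\xi,X_h\rangle_e^{D_\mu}=\int_M g(\xi,\ h\xi-\varphi\,\grad h)\,d\mu=\int_M h\,d\mu=\int_M 1\cdot h\,d\mu=\langle\xi,X_h\rangle_e^{D_\theta},
\end{equation*}
again using $\xi\perp E$. Taking $h\equiv 1$ yields $\langle\xi,\xi\rangle_e^{D_\mu}=\langle\xi,\xi\rangle_e^{D_\theta}=\int_M d\mu$, and if $\int_M h\,d\mu=0$ both pairings vanish.

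I do not expect a serious obstacle: the substantive work—that $\rot^{-1}$ is available at all on $T_e\mathcal{D}_\theta$ and has the stated closed form—was already carried out in Lemmas~\ref{Le11.4}--\ref{Le11.6}, which is exactly where the $K$-contact hypothesis is used (it is what forces $[\xi,\grad f]=0$, and hence Lemma~\ref{Le11.5}). What remains to watch is only the two integrations by parts and the isometry property of $\varphi|_E$; in particular, since each pairing is computed directly rather than by comparing $g(\rot^{-1}X_f,X_h)$ with $g(X_f,\rot^{-1}X_h)$, the self-adjointness of $\rot$ need never be invoked.
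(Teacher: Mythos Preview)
Your proof is correct and follows essentially the same route as the paper: you invoke Lemma~\ref{Le11.6} (resp.\ Lemma~\ref{Le11.4}) to write $\rot^{-1}X_f$ (resp.\ $\rot^{-1}\xi$) explicitly, expand the $L^2$ pairing using $\xi\perp E$ and the isometry of $\varphi|_E$, and finish with a single integration by parts. The paper's computation is line-for-line the same; your version is merely a bit more explicit about which structural facts (properties (5) and (7)) justify the intermediate cancellations.
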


\begin{proof}
Indeed, if the contact vector fields $X_f, X_h \in T_e \mathcal{D}_{\theta}(M^3)$ lies in the orthogonal complement to the vector field $\xi$ with respect to the inner product (\ref{(X,Y)-D-theta-11.4}), then
\begin{multline*}
\left < X_f,X_h \right >_e^{D_\mu} =\left(\rot^{-1}X_f,X_h\right)_e=
\left(-f\xi + 2\varphi\, \grad (\Delta^{-1} f), h\,\xi -\varphi\, \grad\, h \right)_e = {} \\*
=-(f\,\xi,h\,\xi)_e-2\left(\varphi\, \grad (\Delta^{-1} f),\varphi\, \grad\
h \right)_e =  -\int_M fh\ d\mu - 2\int_M g(\grad (\Delta^{-1} f),\grad\, h)\, d\mu = {} \\*
=-\int_M fh\ d\mu + 2\int_M \div (\grad (\Delta^{-1} f))h\ d\mu =
-\int_M fh\ d\mu - 2\int_M fh\ d\mu = -3 \left < X_f,X_h \right >_e^{D_\theta}.
\end{multline*}
  If $X_f=\xi$, then
$$
\left < \xi, X_h \right >_e^{D_\mu} =\left(\rot^{-1}\xi,X_h\right)_e =\left(\xi,h\,\xi -\varphi\, \grad\, h\right)_e=
\int_M h\ d\mu = \left < \xi,X_h \right >_e^{D_\theta}.
$$
$$
\left < \xi, \xi \right >_e^{D_\mu} =\left < \xi,\xi \right >_e^{D_\theta}=
\int_M  d\mu.
$$
\end{proof}



\begin{thebibliography}{999}

\bibitem{Arn}
{\it Arnold V.}  Sur la geometrie differentielle des groupes de Lie de dimenzion infinite et ses  applications  a l'hidrodynamique des fluides parfaits.  Ann. Institut Fourier. 1966, Vol. 16, No.
1,  P. 319-361.

\bibitem{Arn_1}
{\it  Arnold V. I.} Variational principle for three-dimensional stationary flows of the ideal fluid. Prikl. Mat. Mekh., 1965, Vol. 29, No. 5, P. 846–851.

\bibitem{Arn_3}
{\it  Arnold  V.I.} Mathematical Methods of Classical Mechanics. Springer. 1989.

\bibitem{Arn-Kh}
{\it Arnold V.I. and Khesin B.} Topological Methods in Hydrodynamics. Springer Verlag, New York, 1998.

\bibitem{APS1}
{\it Atiyah M.F., Patodi V.K., Singer I.M.}  Spectral asymmetry and Riemannian Geometry. I. Math.  Proc. Camb. Phil. Soc. 1975, Vol. 77, P. 43-69.

\bibitem{APS2}
{\it Atiyah M.F., Patodi V.K., Singer I.M.}  Spectral asymmetry and Riemannian Geometry. II. Math. Proc. Camb. Phil. Soc. 1975, Vol. 78, 405-432.

\bibitem{Bla1}
{\it Blair D.E.} Riemannian Geometry of Contact and Symplectic Manifolds. Progress in Mathematics, Vol. 203. Birkhauser, 2010.

\bibitem{Eb-Mar}
{\it Ebin D., Marsden J.}  Groups of diffeomorphisms and the
motion of an incompressible fluid. Ann. of  Math. 1970, Vol. 92, No. 1, P. 102-163.

\bibitem{Eb-Mar}
{\it Ebin D.G., Preston S. C.}  Riemannian geometry on the quantomorphism group. arXiv:1302.5075 [math.DG], 36 p.

\bibitem{EG}
{\it Etnyre J., Ghrist R.} Contact Topology and Hydrodynamics. arXiv:dg-ga/9708011, 155 p.

\bibitem{EG1}
{\it Etnyre J., Ghrist R.} Contact topology and hydrodynamics II: solid tori. arXiv:math/9907112 [math.SG], 14 p.

\bibitem{EG2}
{\it Etnyre J., Ghrist R.} Contact topology and hydrodynamics III: knotted flowlines. arXiv:math-ph/9906021, 17 p.

\bibitem{EG3}
{\it Etnyre J., Ghrist R.} An index for closed orbits in Beltrami fields. arXiv:math/0101095 [math.DS], 14 p.

\bibitem{Gro1}
{\it Gromov M.} Partial Differential Relations. Springer, 1986.

\bibitem{Ko-No}
Kobayashi S. and Nomizu K. Foundations of Differential Geometry, Vol. 1, 2. Interscience Publ. 1963.

\bibitem{Mi-Fo}
{\it Mishchenko  A.S. and Fomenko A.T.} Euler equations on finite-dimensional Lie groups. Izv. Akad. Nauk SSSR, Ser. Mat. 1978, Vol. 42, No. 2, 396–415.

\bibitem{Omo1}
{\it Omori H.}  On the group of diffeomorphisms on a compact manifold.  Proc. Symp. Pure Math.,  vol. 15,  Amer. Math. Soc. 1970. P. 167-183.

\bibitem{Omo3}
{\it Omori H.} On smooth extension theorems. J. Math. Soc. Japan. 1972, Vol. 24, No. 3, P. 405-432.

\bibitem{Omo5}
{\it Omori H.} Infinite  dimensional  Lie transformations groups. Lect. Notes Math., vol. 427, 1974.

\bibitem{Smo3}
{\it Smolentsev N. K.}
On a certain  weak  Riemannian  structure  on  the  diffeomorphism  group. Izv. Vyssh. Ucheb. Zaved., Ser. Mat. 1979, Vol. 5, 78–80.

\bibitem{Smo8}
{\it Smolentsev N. K.}
Bi-invariant metric on the diffeomorphism group of a three-dimensional manifold. Sib. Mat. Zh. 1983, Vol. 24, No. 1, 152-159. (arXiv:1401.7415v1 [math.DG], 2014).

\bibitem{Smo12}
{\it Smolentsev N. K.}
Bi-invariant metrics on the symplectic diffeomorphism group and the equation $\frac{\partial}{\partial t}\Delta F = \{\Delta F,F \}$. Sib. Mat. Zh. 1986, Vol. 27, No. 1, 150–156.

\bibitem{Smo22}
{\it Smolentsev N. K.}
Curvature  of  the  classical  diffeomorphism  groups. Sib.  Mat.  Zh. 1994, Vol. 35,  No.  1, 169–176.

\bibitem{Serrin}
{\it  Serrin J.} Mathematical Principles of Classical Fluid Mechanics. Springer, 1959.

\end{thebibliography}
\end{document}